\renewcommand{\.}{\hskip 0.7pt}
\newcommand{\C}{\mathcal{C}}
\newcommand{\D}{\mathcal{D}}
\newcommand{\Y}{{\scriptstyle\mathcal{Y}}}
\newcommand{\Su}{\mathcal{S}}
\newcommand{\Lu}{\mathscr{L}\!}
\DeclareMathOperator{\Conv}{conv}
\DeclareMathOperator{\Ex}{Ex}
\DeclareMathOperator{\Exterior}{ext}
\DeclareMathOperator{\RR}{R}
\DeclareMathOperator{\GG}{G}
\DeclareMathOperator{\R}{r}
\DeclareMathOperator{\G}{g}
\DeclareMathOperator{\Sle}{s}
\DeclareMathOperator{\Tot}{t}
\DeclareMathOperator{\W}{w}
\DeclareMathOperator{\A}{A}
\DeclareMathOperator{\B}{B}
\DeclareMathOperator{\M}{\mathcal{M}}
\DeclareMathOperator{\aand}{\;\wedge\;}
\newcommand{\rex}{\RR_{\Exterior}}
\DeclarePairedDelimiter\floor{\lfloor}{\rfloor}
\DeclarePairedDelimiter\ceil{\lceil}{\rceil}
\newtheorem{theorem}{Theorem}
\newtheorem{lemma}[theorem]{Lemma}
\newtheorem{prop}[theorem]{Proposition}
\theoremstyle{definition}
\newtheorem{definition}[theorem]{Definition}
\newtheorem{observation}[theorem]{Observation}
\newtheorem{conj}[theorem]{Conjecture}
\begin{document}

\papertitle[Convex Grabbing Game]{Massively Winning Configurations in the Convex~Grabbing Game on the Plane}

\paperauthor{Martin Dvorak}
\paperaddress{Department of Theoretical Computer Science and Mathematical Logic, \newline
Faculty of Mathematics and Physics, Charles University; \newline 
and Institute of Science and Technology, Austria;}
\paperemail{martin.dvorak@matfyz.cz}

\paperauthor{Sara Nicholson}
\paperaddress{Department of Applied Mathematics, \newline
Faculty of Mathematics and Physics, Charles University;}
\paperemail{s.nicholson@mail.utoronto.ca}

\makepapertitle

\Summary{The convex grabbing game is a game where two players, Alice and Bob, alternate taking extremal points from the convex hull of a point set on the~plane. Rational weights are given to the points. The goal of each player is to maximize the total weight over all points that they obtain. We restrict the setting to the case of binary weights. We show a construction of an arbitrarily large odd-sized point set that allows Bob to obtain almost $3/4$ of the total weight. This construction answers a question asked by Matsumoto, Nakamigawa, and Sakuma in [Graphs and Combinatorics, 36/1\,(2020)]. We also present an arbitrarily large even-sized point set where Bob can obtain the entirety of the total weight. Finally, we discuss conjectures about optimum moves in the convex grabbing game for both players in general.}

\section{Introduction}

The graph grabbing game, first presented by Winkler~\cite{graph_grabbing}, is a game where two players alternate removing non-cut vertices from a vertex-weighted graph. This game has been studied \cite{graph_sharing,grab2, gold_grabbing} and led to variants including the convex grabbing game by Matsumoto, Nakamigawa, and Sakuma~\cite{matsumoto_convex_2020}, which we discuss here.

A cake $\C$ is determined by a set of points, which we call \textit{cherries}, that lie in general position in the Euclidean plane. Each cherry $c$ is given a weight $\W(c) \in \mathbb{Q}$. There are two players in this game: \textit{Alice} and \textit{Bob}. They alternate selecting cherries from the set of remaining cherries $C \subseteq \C$, with Alice going first. They can only select extremal points of the convex hull of $C$, defined by $\Ex(C) = \{ c \in C ~|~ c \notin \Conv(C \setminus \{c\}) \}$, and the selected cherry is removed from the set $C$. The game is over when all cherries are taken.

If $|\C|$ is even, we say that $\C$ is an \textit{even-sized cake}. Similarly, if $|\C|$ is odd, we say that $\C$ is an \textit{odd-sized cake}.
\bigskip

\noindent We denote the sequence of moves by Alice as:
\[(a_1,a_2, \dots, a_{\ceil*{\frac{|\C|}{2}}})\]
We denote the sequence of moves by Bob as:
\[(b_1,b_2, \dots, b_{\floor*{\frac{|\C|}{2}}})\] 
This results in a gameplay
\[ \mathbf{q} = (a_1, b_1, a_2, b_2, \dots, a_{\frac{|\C|}{2}}, b_{\frac{|\C|}{2}}) \]
on an even-sized cake $\C$, or
\[ \mathbf{q} = (a_1, b_1, a_2, b_2, \dots, a_{\floor*{\frac{|\C|}{2}}}, b_{\floor*{\frac{|\C|}{2}}}, a_{\ceil*{\frac{|\C|}{2}}}) \]
on an odd-sized cake $\C$. In~the~end, each player obtains a total score equal to the sum of the weights of the cherries they selected. In particular, we define the total gain of Alice as:
\[ \A(\mathbf{q}) =\!\! \sum\limits_{i \.\in\. \{1, 2, \dots, \ceil*{\frac{|\C|}{2}}\} } \!\!\!\W(a_i) \]
The objective of Alice is to maximize $\A(\mathbf{q})$. The objective of Bob is to minimize $\A(\mathbf{q})$.
We also define the complement 
\[ \B(\mathbf{q}) =\!\! \sum\limits_{i \.\in\. \{1, 2, \dots, \floor*{\frac{|\C|}{2}}\} } \!\!\!\W(b_i) \]
We observe that $ \A(\mathbf{q}) + \B(\mathbf{q}) $ is invariant of $\mathbf{q}$; it is constant for a given cake $\C$.
Alice wants to minimize $\B(\mathbf{q})$ and, naturally, Bob wants to maximize $\B(\mathbf{q})$. We will work with $\B(\mathbf{q})$ a lot because we will focus on maximizing Bob's results --- which looks like a harder task, at least at first glance.

Finally, we define the \textit{minimax result} on the cake $\C$, denoted by $\M(\C)$, as the total gain of Bob if both players play optimally throughout the whole game.
\[ \M(\C) = \!\min_{a_1 \in\. \Ex(\C)} 
\left( \max_{\substack{b_1 \in\. \Ex( \\ \C\. \setminus \{a_1\})}}
\left( \min_{\substack{a_2 \in\. \Ex( \\ \C\. \setminus \{a_1,b_1\})}} 
\left( \max_{\substack{b_2 \in\. \Ex( \\ \C\. \setminus \{a_1,b_1,a_2\})}}
\left(  \!\dots\!  \biggl( \!
\B\!\. \rule{0mm}{9mm} ((a_1, b_1, a_2, b_2, \dots)) \! \biggr) \!\dots\! \right)
\! \right) \! \right) \! \right) \]

We focus on a restricted version of this game where only $\{0,1\}$ weights are considered. Any cherry $c \in \C$ where $\W(c) = 1$ is called \textit{red}; and we define $\RR(\C) \coloneqq \{ c_r \in \C ~|\. \W(c_r) = 1\}$. In a similar manner, any cherry $c \in \C$ where $\W(c) = 0$ is called \textit{green}; and we define $\GG(\C) \coloneqq \{c_g \in \C ~|\. \W(c_g) = 0\}$. We thereby have $\RR(\C) \cup \GG(\C) = \C$ and $\RR(\C) \cap \GG(\C) = \emptyset$.

Furthermore, for any $C \subseteq \C$, we define values $\R(C) \coloneqq |\RR(C)\.|$ and $\G(C) \coloneqq |\GG(C)\.|$. Note that $\R(C) + \G(C) = |C|$ and that $\R(C) = \sum_{c \in C} \W(c)$.

Matsumoto, Nakamigawa, and Sakuma \cite{matsumoto_convex_2020} posed the question of finding the \hbox{maximum} possible value for $\M(\C) - (\R(\C) - \M(\C))$ on an odd-sized cake, that is, how much can Bob win by? In Section~\ref{sun}, we present, for any natural number $z$, a construction of an odd-sized cake $\C$ such that $\R(\C) = 4z+2$; and we provide a tactic for Bob which guarantees $\M(\C) \geq \frac{3}{4}\R(\C) - \frac{1}{2}$. Therefore, Bob can win by an arbitrarily large margin.

In Section~\ref{moon}, we show that there exists an even-sized cake~$\C$ where $\R(\C) = m$ and $\M(\C) = m$ for every $m \in \mathbb{N}$; that is, Bob can obtain all red cherries.

\pagebreak
\section{Order types}

In this section, we provide a combinatorial point of view on the convex grabbing game. The following definition has been adapted from \cite{ordertypes}.
\smallskip

\begin{definition}
    Given a tuple $(p, q, r)$ of three distinct cherries, we define their \textit{orientation} $\nabla pqr$ as $+1$ if the sequence $(p, q, r)$ traverses the triplet $\{p, q, r\}$ in a counterclockwise direction, and as $-1$ if this direction is clockwise.
    
    Consider two cakes, $P$ and $Q$, where $|P| = |Q|$. We say that a bijection $\pi : P \rightarrow Q$ is \textit{order-preserving} if $\W(c) = \W(\pi(c))$ for each cherry $c \in P$ and there exists a sign $\sigma \in \{-1, +1\}$ such that $\nabla \pi(p)\pi(q)\pi(r) = \sigma \cdot \nabla pqr$ for all sequences $(p, q, r)$ of three distinct cherries in $P$.
    
    If such a bijection exists, we say that $P$ and $Q$ are \textit{order-equivalent}. We see that this relation is reflexive (using the identity; $\sigma_{P,P} = 1$), symmetric (using the inverse bijection; $\sigma_{Q,P} = \sigma_{P,Q}$), and transitive (using the compositions of bijections; $\sigma_{O,Q} = \sigma_{P,Q} \cdot \sigma_{O,P}$). The resulting equivalence classes are called \textit{order types}.
\end{definition}

\noindent We will show that the result of the convex grabbing game depends on the order~type~only, and so the game could be defined using order types instead of cakes.
\bigskip

\begin{lemma} \label{insidetriangle}
    Let $x, y, z, p \in \mathbb{R}^2$ be four distinct points in the plane. The point $p$ lies inside the triangle $\Conv(\{x,y,z\})$ if and only if $\nabla xyz = \nabla xyp = \nabla xpz = \nabla pyz$.
\end{lemma}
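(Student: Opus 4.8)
The plan is to prove the two directions separately, using the fact that the orientation $\nabla xyz = +1$ means exactly that $z$ lies in the open left half-plane of the directed line from $x$ to $y$ (and $-1$ means the right half-plane). Since the four points are distinct and — in the intended application — in general position, no three are collinear, so every orientation is $\pm 1$; I will note at the outset that if some triple among $\{x,y,z,p\}$ is collinear, both sides of the claimed equivalence fail, so the statement is vacuously consistent, and I may assume general position.

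\medskip
\noindent\textbf{($\Leftarrow$) Suppose $\nabla xyz = \nabla xyp = \nabla xpz = \nabla pyz$.}
Without loss of generality, relabel so that this common value is $+1$ (the case $-1$ is symmetric, obtained by swapping two of the outer vertices, which flips all four orientations simultaneously). Then $\nabla xyz = +1$ says $x,y,z$ are in counterclockwise order. The three conditions $\nabla xyp = \nabla xpz = \nabla pyz = +1$ say, respectively, that $p$ is on the interior side of the edge $xy$, of the edge $zx$, and of the edge $yz$ of the triangle $xyz$ traversed counterclockwise. A point lying strictly on the interior side of all three edges of a (counterclockwise-oriented) triangle is exactly a point in the open interior of that triangle; I would make this precise by writing $p = \alpha x + \beta y + \gamma z$ with $\alpha+\beta+\gamma = 1$ (barycentric coordinates, which exist and are unique since $x,y,z$ are affinely independent) and checking that each orientation sign condition translates into one of $\alpha>0$, $\beta>0$, $\gamma>0$; hence $p$ is an interior point.

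\medskip
\noindent\textbf{($\Rightarrow$) Suppose $p$ lies in the interior of $\Conv(\{x,y,z\})$.}
Again write $p = \alpha x + \beta y + \gamma z$ with $\alpha,\beta,\gamma > 0$ and $\alpha+\beta+\gamma = 1$. Using bilinearity/antisymmetry of the orientation (viewing $\nabla uvw$ as the sign of a $2\times 2$ determinant in the edge vectors), substitute this expression for $p$ into each of $\nabla xyp$, $\nabla xpz$, $\nabla pyz$; each reduces to a positive multiple ($\gamma$, $\beta$, and $\alpha$ respectively) of $\nabla xyz$, so all four orientations coincide.

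\medskip
\noindent The main technical point — and the only place care is needed — is setting up the barycentric-coordinate computation cleanly so that each of the three orientation equalities corresponds to exactly one positive barycentric coordinate; once that dictionary is in place both directions are immediate. I would present the determinant identity $\nabla x\.y\.(\alpha x+\beta y+\gamma z) = \operatorname{sign}(\gamma)\cdot\nabla xyz$ (and its two cyclic analogues) as the single computational lemma and then read both implications off it.
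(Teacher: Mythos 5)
Your proposal is correct. Both directions go through: the identity $\nabla x y (\alpha x + \beta y + \gamma z) = \operatorname{sign}(\gamma)\cdot \nabla xyz$ (and its cyclic analogues) is a valid one-line determinant computation, since $p - x = \beta(y-x) + \gamma(z-x)$ when $\alpha+\beta+\gamma=1$, and the open interior of the triangle is exactly the set of points with all three barycentric coordinates positive, so the equivalence follows. The paper takes a shorter, purely geometric route: it reads each equality $\nabla xyz = \nabla xyp$ (resp.\ $\nabla xpz$, $\nabla pyz$) directly as the statement that $p$ lies in the open half-plane bounded by the line through $x,y$ (resp.\ $x,z$; $y,z$) on the side of the third vertex, and then observes that the intersection of these three open half-planes is precisely the interior of $\Conv(\{x,y,z\})$. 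Conceptually the two arguments are the same dictionary --- the sign of each barycentric coordinate records the side of the corresponding edge line --- but yours makes the dictionary explicit and computational (a single determinant identity from which both implications are read off), at the cost of introducing barycentric coordinates, while the paper's is an assertion-level geometric argument. One small remark: the degenerate cases you set aside at the start do not really arise, since cherries are assumed to be in general position and $\nabla$ is only defined (as $\pm 1$) for non-collinear triples, so your ``vacuously consistent'' caveat can simply be dropped; also note that when $p$ lies on one of the edge lines the relevant orientation is undefined rather than ``false,'' so phrasing it as both sides failing is a slight abuse, though harmless here.
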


\begin{proof}
    The condition $\nabla xyz = \nabla xyp$ is equivalent to saying that the point $p$ lies in the open half-plane determined by the line $\overline{xy}$ and the point $z$. Similarly, $\nabla xyz = \nabla xpz$ if and only if $p$ lies in the open half-plane determined by the line $\overline{xz}$ and the point $y$; and $\nabla xyz = \nabla pyz$ if and only if $p$ lies in the open half-plane determined by the line $\overline{yz}$ and the point $x$. The intersection of these three open half-planes is precisely the inner area of the triangle $\Conv(\{x,y,z\})$.
\end{proof}
\smallskip

\begin{lemma} \label{extremalcommute}
    Let $\C$ and $\D$ be two order-equivalent cakes whose order-preserving bijection is $\pi : \C \rightarrow \D$. Let $C \subseteq \C$ and $D = \pi(C) \subseteq \D$. Then $\Ex(D) = \pi(\Ex(C))$.
\end{lemma}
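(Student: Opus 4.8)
The key fact is that a point $c \in C$ fails to be extremal in $C$ precisely when $c$ lies inside the convex hull of $C \setminus \{c\}$, and by Carathéodory's theorem in the plane this happens exactly when $c$ lies inside (or on the boundary of — but general position rules out the boundary case) some triangle spanned by three other points of $C$. So the plan is: (1) reformulate non-extremality as "there exist $x, y, z \in C \setminus \{c\}$ with $c \in \Conv(\{x,y,z\})$"; (2) apply Lemma \ref{insidetriangle} to rewrite this purely in terms of the orientation predicate $\nabla$; (3) observe that an order-preserving bijection $\pi$ multiplies every $\nabla$ by a global sign $\sigma$, hence preserves all equalities between $\nabla$-values; (4) conclude that $c$ is non-extremal in $C$ iff $\pi(c)$ is non-extremal in $D$, which is the claim.

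More precisely, I would first record the Carathéodory reduction as a short sub-step: $c \notin \Ex(C)$ iff $c \in \Conv(C \setminus \{c\})$ iff there exist three distinct points $x, y, z \in C \setminus \{c\}$ such that $c$ lies in $\Conv(\{x,y,z\})$ — here I use that $C$ is in general position, so $c$ cannot lie on an edge of such a triangle, and that a point of a planar convex hull that is not a vertex lies in the interior of some triangle of the generating set. Then, by Lemma \ref{insidetriangle}, this is equivalent to the existence of $x, y, z \in C \setminus \{c\}$ with
\[ \nabla xyz = \nabla xyc = \nabla xcz = \nabla cyz. \]
Now apply $\pi$. Since $\pi$ is order-preserving, there is a fixed $\sigma \in \{-1,+1\}$ with $\nabla \pi(p)\pi(q)\pi(r) = \sigma \cdot \nabla pqr$ for every triple; multiplying the displayed chain of equalities by $\sigma$ shows it holds for $x,y,z,c$ iff it holds for $\pi(x), \pi(y), \pi(z), \pi(c)$. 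Because $\pi$ is a bijection, $x,y,z$ range over $C \setminus \{c\}$ exactly as $\pi(x), \pi(y), \pi(z)$ range over $D \setminus \{\pi(c)\}$. Hence $c \notin \Ex(C) \iff \pi(c) \notin \Ex(D)$, i.e. $\pi(c) \in \Ex(D) \iff c \in \Ex(C)$. Taking the image over all $c$ gives $\pi(\Ex(C)) = \Ex(D)$.

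I do not expect a serious obstacle here; the only point that needs a little care is the Carathéodory-type reduction in step (1) — specifically justifying that a non-extremal point lies in the \emph{interior} of a triangle spanned by \emph{other} points of $C$ (not just in its closed convex hull), which is where the general-position hypothesis is used. Everything after that is a mechanical translation through Lemma \ref{insidetriangle} and the definition of order-preserving. It is also worth noting explicitly that $\pi$ respecting weights plays no role in this particular lemma — only the orientation-compatibility is needed — but there is no harm in having it available.
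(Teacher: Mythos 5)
Your proposal is correct and follows essentially the same route as the paper: reduce non-extremality to containment in a triangle via Carath\'eodory's theorem and general position, translate that into orientation equalities via Lemma~\ref{insidetriangle}, and use the fixed sign $\sigma$ together with bijectivity of $\pi$ to transport the condition to $D$ (the paper just formats this as a chain of five equivalent statements). Your explicit remark about why the non-extremal point lies in the \emph{interior} of a triangle is a welcome bit of care that the paper leaves implicit.
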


\begin{proof}
    Let $c \in C$. Consider the following five statements.

    $$V_1(c) \equiv\quad \Bigl(~ c \in \Ex(C) ~\Bigr)$$
    
    \begin{multline*}
        V_2(c) \equiv\quad \neg \Bigl(~ \exists (x,y,z) \in (C \setminus \{c\})^3 :\;\; x \neq y \neq z \neq x \aand
        \\ \aand  \nabla xyz = \nabla xyc = \nabla xcz = \nabla cyz ~\Bigr)
    \end{multline*}
    \begin{multline*}
        \!\!\!V_3(c) \equiv~ \neg \Bigl(~ \exists (x,y,z) \in (C \setminus \{c\})^3 :\;\; x \neq y \neq z \neq x \aand
        \\ \aand \nabla \pi(x)\pi(y)\pi(z) = \nabla \pi(x)\pi(y)\pi(c) = \nabla \pi(x)\pi(c)\pi(z) = \nabla \pi(c)\pi(y)\pi(z) ~\Bigr)
    \end{multline*}
    
    \pagebreak
    \begin{multline*}
        V_4(c) \equiv\quad \neg \Bigl(~ \exists (x',y',z') \in (D \setminus \{\pi(c)\})^3 :\; x' \neq y' \neq z' \neq x' \aand
        \\ \aand \nabla x'y'z' = \nabla x'y'c\. = \nabla x'c\.z' = \nabla c\.y'z' ~\Bigr)
    \end{multline*}
    
    $$V_5(c) \equiv\quad \Bigl(~\pi(c) \in \Ex(D) ~\Bigr)$$

    \bigskip

    $V_1(c)$ is equivalent to $V_2(c)$ by combining Lemma~\ref{insidetriangle} with Carathéodory's theorem.
    We know that $V_2(c)$ is equivalent to $V_3(c)$ because $\pi$ is order-preserving.
    We know that $V_3(c)$ is equivalent to $V_4(c)$ because $\pi$ is bijective.
    $V_4(c)$ is equivalent to $V_5(c)$ again by Lemma~\ref{insidetriangle} and Carathéodory's theorem.
    
    After applying the same argument for all cherries in $c \in C$, we obtain the desired identity $\Ex(D) = \pi(\Ex(C))$.
\end{proof}

\begin{prop}
    If $\C$ and $\D$ are order-equivalent cakes, then $\M(\C) = \M(\D)$. 
\end{prop}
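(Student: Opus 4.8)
The plan is to prove the statement by induction on $|\C| = |\D|$, after first rephrasing the nested minimax expression as a pair of mutually recursive value functions so that the alternation between Alice and Bob is handled cleanly. For a subset of cherries $C$, let $\mathrm{val}_A(C)$ be Bob's eventual total gain under optimal play when it is Alice's turn and $C$ is the set of remaining cherries, and let $\mathrm{val}_B(C)$ be the same quantity when it is Bob's turn. These satisfy $\mathrm{val}_A(\emptyset) = \mathrm{val}_B(\emptyset) = 0$ and, for $C \neq \emptyset$ (so that $\Ex(C) \neq \emptyset$, since every finite nonempty point set has an extreme point),
\[
  \mathrm{val}_A(C) = \min_{a \in \Ex(C)} \mathrm{val}_B(C \setminus \{a\}), \qquad
  \mathrm{val}_B(C) = \max_{b \in \Ex(C)} \Bigl( \W(b) + \mathrm{val}_A(C \setminus \{b\}) \Bigr).
\]
Unwinding these recursions reproduces exactly the nested $\min$/$\max$ expression defining $\M$ (the $\W(b)$ terms accumulate additively to give $\B(\mathbf{q})$ at the leaves, and a leaf is reached precisely when the cake is exhausted), so $\M(\C) = \mathrm{val}_A(\C)$ and likewise $\M(\D) = \mathrm{val}_A(\D)$.

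I would then prove, by induction on $|C|$, that for every $C \subseteq \C$ with $D = \pi(C)$ we have both $\mathrm{val}_A(C) = \mathrm{val}_A(D)$ and $\mathrm{val}_B(C) = \mathrm{val}_B(D)$. The base case $|C| = 0$ is immediate. For the inductive step the key inputs are: (i) Lemma~\ref{extremalcommute}, which gives $\Ex(D) = \pi(\Ex(C))$, so $c \mapsto \pi(c)$ is a bijection from $\Ex(C)$ onto $\Ex(D)$; (ii) the observation that the restriction of an order-preserving bijection to any subset is again order-preserving --- the weight condition and the sign condition (with the same $\sigma$) are both inherited, the latter because it quantifies over all triples --- so for $a \in \Ex(C)$ the map $\pi$ restricts to an order-preserving bijection $C \setminus \{a\} \to D \setminus \{\pi(a)\}$; and (iii) weight-preservation, $\W(b) = \W(\pi(b))$. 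Since $|C \setminus \{a\}| < |C|$, the inductive hypothesis gives $\mathrm{val}_B(C \setminus \{a\}) = \mathrm{val}_B(D \setminus \{\pi(a)\})$ for every $a \in \Ex(C)$; taking the minimum over the bijection $\Ex(C) \cong \Ex(D)$ yields $\mathrm{val}_A(C) = \mathrm{val}_A(D)$. The argument for $\mathrm{val}_B$ is identical, with the extra summand $\W(b)$ matched using (iii). Applying the claim with $C = \C$ gives $\M(\C) = \mathrm{val}_A(\C) = \mathrm{val}_A(\D) = \M(\D)$.

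There is no deep obstacle here; the proof is essentially a bookkeeping argument showing that $\pi$ induces an isomorphism of the two game trees. The one place that requires a little care is the alternation of players: a naive single-function induction stumbles because after Alice's first move the remaining game "starts with Bob", which is why I run the induction simultaneously on the pair $\mathrm{val}_A, \mathrm{val}_B$ rather than peeling off moves two at a time (the latter being awkward because of the odd-/even-sized distinction built into the definition of $\M$). A secondary point worth stating explicitly is that $\Ex(C)$ is nonempty whenever $C$ is, so the $\min$ and $\max$ above are always over nonempty finite sets and are well defined.
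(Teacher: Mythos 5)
Your proof is correct, and at its core it is the same argument as the paper's: an induction that transports the min/max over extremal cherries through $\pi$ using Lemma~\ref{extremalcommute} together with weight preservation. The only real difference is bookkeeping. The paper peels off \emph{two} plies at a time, writing $\M(\D) = \min_{a_1}\max_{b_1}\bigl(\W(b_1) + \M(\D\setminus\{a_1,b_1\})\bigr)$ and inducting on $\M$ directly, with a base case $|\C|<4$ (where all cherries are always extremal and greedy play is optimal) to absorb the odd/even endgame; you instead peel one ply at a time via the mutually recursive pair $\mathrm{val}_A,\mathrm{val}_B$ and run a simultaneous induction, which makes the alternation of players explicit and needs no parity-sensitive base case. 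Your remark that the two-ply version is awkward is a bit overstated --- the paper's small base case handles it cleanly --- and note that both routes lean on the same unstated backward-induction fact: your claim that unwinding $\mathrm{val}_A$ reproduces the nested minimax plays exactly the role of the paper's ``from def.'' step identifying $\M(\D)$ with its two-ply recursion. Your observation that the restriction of $\pi$ to a subset is again order-preserving is correct but not strictly needed, since Lemma~\ref{extremalcommute} is already stated for arbitrary subsets $C\subseteq\C$ with $D=\pi(C)$.
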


\begin{proof}
    We use induction on the cake size $|\C| = |\D|$.
    
    If $|\C| = |\D| < 4$, then $\Ex(\C)=\C$ and $\Ex(\D)=\D$. It is optimal to take the cherries in the order of decreasing weights for both players. We easily see $\M(\C) = \M(\D)$. 
    
    If $|\C| = |\D| \ge 4$, we use the induction hypothesis for $|C| = |D| = |\C| - 2 = |\D| - 2$. Let $\pi : \C \rightarrow \D$ be the order-preserving bijection. The calculation goes as follows. 

    \[ \M(\D) = \!\min_{a_1 \in\. \Ex(\D)} 
    \left( \max_{\substack{b_1 \in\. \Ex( \\ \D\. \setminus \{a_1\})}}
    \left( \min_{\substack{a_2 \in\. \Ex( \\ \D\. \setminus \{a_1,b_1\})}} \!
    \left( \max_{\substack{b_2 \in\. \Ex( \\ \D\. \setminus \{a_1,b_1,a_2\})}} \!
    \left(  \!\dots\!  \biggl( \!
    \B\!\. \rule{0mm}{9mm} ((a_1, b_1, a_2, b_2, \dots)) \! \biggr) \!\dots\! \right)
    \! \right) \! \right) \! \right) \]
    
    \[  \stackrel{\text{from~def.}}{=} ~~~ \min_{a_1 \in\. \Ex(\D)} 
    \left( \max_{\substack{b_1 \in\. \Ex( \\ \D\. \setminus \{a_1\})}}
    \Bigl( \W(b_1) + \M(\D\. \setminus \{a_1,b_1\})  \Bigr) \! \right)  \]
    
    \[  \stackrel{\text{Ind.~hyp.}}{=} ~~~ \min_{a_1 \in\. \Ex(\D)} 
    \left( \max_{\substack{b_1 \in\. \Ex( \\ \D\. \setminus \{a_1\})}}
    \Bigl( \W(b_1) + \M(\C\. \setminus \{\pi^{\!-1}(a_1), \pi^{\!-1}(b_1)\})  \Bigr) \! \right)  \]
    
    \[  \stackrel{\text{Lemma~\ref{extremalcommute}}}{=} ~~~  \min_{a_1 \in\. \pi(\Ex(\C))} 
    \left( \max_{\substack{b_1 \in\. \pi(\Ex( \\ \C\. \setminus \{\pi^{\!-1}(a_1)))}}
    \Bigl( \W(\pi^{\!-1}(b_1)) + \M(\C\. \setminus \{\pi^{\!-1}(a_1), \pi^{\!-1}(b_1)\})  \Bigr) \! \right)  \]
    
    \[  \stackrel{\substack{a'_1 \coloneqq \pi^{\!-1}(a_1) \\ b'_1 \coloneqq \pi^{\!-1}(b_1)}}{=} ~~~ \min_{a'_1 \in\. \Ex(\C)} 
    \left( \max_{\substack{b'_1 \in\. \Ex( \\ \C\. \setminus \{a'_1\})}}
    \Bigl( \W(b'_1) + \M(\C\. \setminus \{a'_1,b'_1\})  \Bigr) \! \right) 
    ~~~ \stackrel{\text{from~def.}}{=} ~~~ \M(\C) \]
    
    \noindent We have thereby verified $\M(\C) = \M(\D)$.
\end{proof}

\pagebreak
\section{Sun configuration} \label{sun}

We present a family of odd-sized cakes which we call the sun configuration, and we show that, from any cake $\C$ in the family, Bob will obtain at least $\frac{3}{4}\R(\C) - \frac{1}{2}$ red cherries given that he follows a certain tactic.
\medskip

\begin{definition}
    We define a \textit{beam} $\Y$ as four cherries in the order [\.green, red, green, red\.] lying on an arc (see Figure \ref{fig:ray}).
\end{definition}

\begin{figure}[h]
    \centering
    \begin{subfigure}{.22\textwidth}
        \centering
        \includegraphics[width=.8\linewidth]{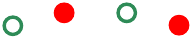}
        \caption{Beam ($\Y$).}
        \label{fig:ray}
    \end{subfigure}%
    \begin{subfigure}{.68\textwidth}
        \centering
        \includegraphics[width=.8\linewidth]{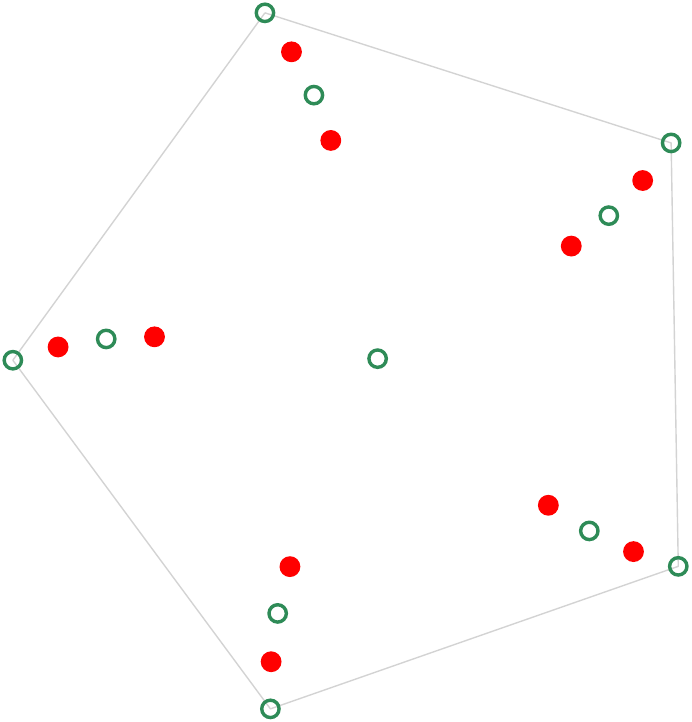}
        \caption{Sun ($\mathcal{S}_{\.5}$).}
        \label{fig:sun}
    \end{subfigure}
    \caption{Sun configuration.}
\end{figure}

\begin{definition}
    Let $k > 2$ be an odd integer. We define the \textit{sun} as a cake $\Su_{k}$ with $k$ beams and an additional green cherry $\zeta$ in the centre (see Figure~\ref{fig:sun} for an example $\Su_{5}$) such that:
    \begin{itemize}[leftmargin=10mm]
        \item The sun is rotationally symmetric with the $k$ beams evenly spaced around the centre~$\zeta$.
        \item Each beam is far enough from the centre such that, with the removal of any proper subset $Y$ of the cherries on the beam, the outermost cherry on the beam will always be in $\Ex(\Su_{k} \setminus Y)$.
        \item Consider any beam $\Y_i$ (see Figure~\ref{fig:sun_cut}). A line drawn through any two cherries of $\Y_i$ does not cut through any other beam and it keeps $\frac{k-1}{2}$ beams from $\Su_{k} \setminus \Y_i$ on each side. Additionally, if we consider $\Y_i \cup \{\zeta\}$, then they are all in convex position.
    \end{itemize}
    We have constructed a sun $\Su_k$ where $\R(\Su_k) = 2k$ and $\G(\Su_k) = 2k+1$.
\end{definition}

\begin{figure}
  \centering
  \includegraphics[width=12cm]{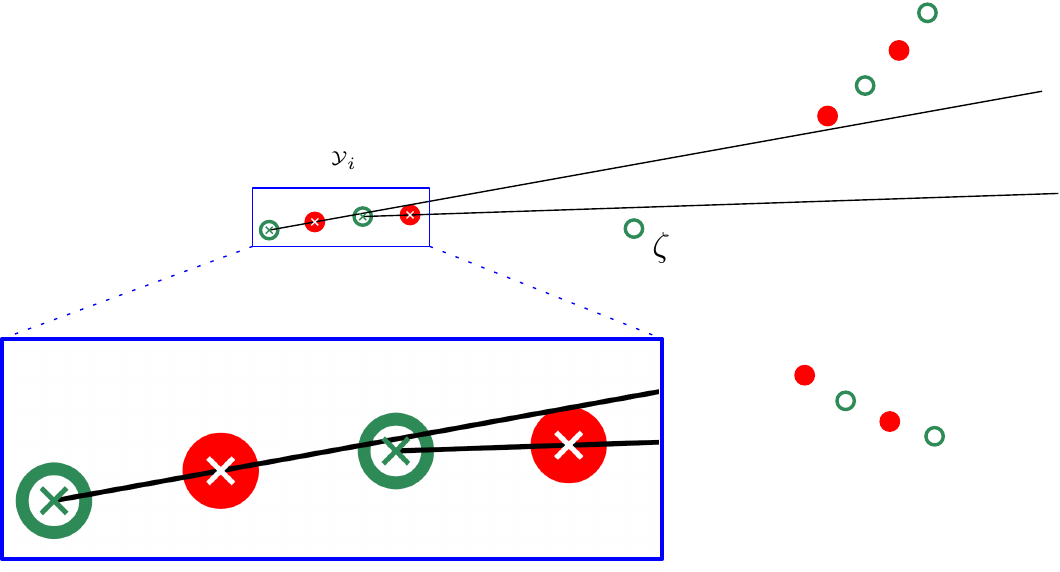}
  \caption{Highlighting details for the sun.}
  \label{fig:sun_cut}
\end{figure}

\pagebreak
\begin{definition} \label{carefulgreedy}
    In the convex grabbing game on a sun, we say that a player follows the \textit{Careful greedy tactic} if the player chooses a move according to these instructions:
    \medskip
    \texttt{
        ~\\Is there an extremal red cherry? \\
            \indent YES $\longrightarrow$ Is there an extremal red cherry on a beam that also contains\\ \indent a red cherry that is not extremal? \\
                \indent \indent YES $\longrightarrow$ Take the extremal red cherry from this beam. \\
                \indent \indent NO $\longrightarrow$ Is there a beam with a single extremal red cherry? \\
                \indent \indent \indent YES $\longrightarrow$ Take this extremal red cherry. \\
                \indent \indent \indent NO $\longrightarrow$ Take any extremal red cherry. \\
            \indent NO $\longrightarrow$ Is there any beam with at least one green cherry and \\
            \indent no red cherries on it? \\
            \indent \indent YES $\longrightarrow$ Take an extremal green cherry from this beam. (at least \\ 
            \indent \indent one cherry from each remaining beam is extremal in any moment) \\
            \indent \indent NO $\longrightarrow$ FAIL!
    }
\end{definition}
\smallskip

\begin{theorem} \label{maintheorem}
    From the sun $\Su_k$, Bob will get at least $\frac{3k-1}{2} = \frac{3}{4} \R(\Su_k) - \frac{1}{2}$ red cherries by using the Careful greedy tactic, no matter how Alice plays. As a result, we obtain the desired property $\M(\Su_k) \ge \frac{3}{4} \R(\Su_k) - \frac{1}{2}$.
\end{theorem}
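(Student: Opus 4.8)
The plan is to pass from geometry to combinatorics, reduce the bound to a statement about which red cherries Alice can ever grab, and then run a potential‑function induction on the cake size. For the first (geometric) step I would turn the three defining conditions on $\Su_k$ into a description of $\Ex(C)$ for every cake $C$ reachable during play: as long as at least two beams survive, $\Ex(C)$ is exactly the outermost surviving cherry of each surviving beam (the second condition gives that this cherry is extremal, and the third — the one about lines through two cherries of $\Y_i$ keeping $\frac{k-1}{2}$ beams on each side — keeps beams from interfering, so deleting a cherry of one beam exposes neither an inner cherry of another beam nor $\zeta$); and once only one beam together with $\zeta$ remains, $\Ex(C)=C$ by the ``$\Y_i\cup\{\zeta\}$ in convex position'' clause. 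Two consequences are worth isolating. The state of the game is then just the tuple recording which prefix of each beam survives together with whether $\zeta$ survives, so the game is essentially combinatorial; and since the colours along a beam alternate as [green, red, green, red] while Careful greedy removes a green only from a beam carrying no red, Bob never \emph{uncovers} a red — the green directly in front of a red is always removed by Alice. In the same breath I would check that Careful greedy never reaches FAIL: on Bob's turn with no extremal red every surviving beam shows a green outermost cherry, and if in addition every surviving beam still carried a red then Alice's previous move must have uncovered one (she cannot have deleted an inner green, nor played on a red‑free beam since there is none), a contradiction; hence some beam is red‑free and the last branch of the tactic applies.

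Next I would reduce the theorem. Call a beam \emph{split} if Alice ever takes one of its two red cherries, and reduce to two claims about Careful greedy: (i) Bob takes at least one red from every beam; and (ii) at most $\frac{k+1}{2}$ beams are split. Granting these, if $s\le\frac{k+1}{2}$ beams are split, each non‑split beam hands Bob both of its reds and each split beam hands him exactly one, so Bob collects at least $2(k-s)+s=2k-s\ge 2k-\frac{k+1}{2}=\frac{3k-1}{2}$; since this holds whatever Alice does, $\M(\Su_k)\ge\frac{3k-1}{2}=\frac34\R(\Su_k)-\frac12$, which is the theorem.

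Claims (i) and (ii) I would obtain from a potential $\Phi(C)$ on reachable states, built as a sum of per‑beam contributions (how much Bob is still owed from each beam, depending on the surviving prefix and on whose turn it is) plus a global correction term of order $\frac{k}{2}$ reflecting the move parity and the role of $\zeta$, arranged so that $\Phi(\Su_k)=\frac{3k-1}{2}$ and so that for \emph{every} $a\in\Ex(C)$ Alice might take, with $b$ the Careful‑greedy reply, $\W(b)+\Phi(C\setminus\{a,b\})\ge\Phi(C)$. With the base cases $|C|<4$ checked directly, induction on $|C|$ then yields that Bob, playing the tactic from $C$, collects at least $\Phi(C)$ reds, and (i) and (ii) fall out of the same bookkeeping. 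The rotational symmetry together with the ``$\frac{k-1}{2}$ beams on each side'' condition is exactly what makes the one‑move inequality hold: it prevents Alice from uncovering reds faster than Bob harvests them except in the endgame, and it is precisely the endgame slack — one extra Alice move because $\Su_k$ is odd, plus the cherries that $\zeta$ unlocks once a single beam is left — that costs Bob the $\frac14$.

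I expect the main obstacle to be this last step, and within it the endgame: when only $\zeta$ and one beam remain, several of that beam's cherries become extremal at once and Alice may reach a red she could not touch before, so $\Phi$ must be designed to have already paid for it; juggling the per‑beam values against the global term so that $\Phi(\Su_k)=\frac{3k-1}{2}$, the base cases hold, and the one‑move inequality survives in every case Alice can create is the real work. The only other point requiring care, dispatched already in the first step, is verifying that the Careful greedy tactic is always well‑defined.
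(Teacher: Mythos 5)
There is a genuine gap, and it sits exactly where the real difficulty of the theorem lies. Your opening reduction --- that as long as at least two beams survive, $\Ex(C)$ consists only of the outermost surviving cherry of each surviving beam, so that red cherries can only be exposed one at a time by Alice removing the green in front --- is false for the sun. It holds only while the remaining cherries do \emph{not} fit into a closed half-plane through $\zeta$. Once whole beams on one side have been eaten (which the play you yourself describe produces), all remaining cherries lie in such a half-plane, and then inner cherries of several still-intact beams, including their red cherries, become extremal simultaneously even though many beams (up to roughly $\tfrac{k+1}{2}$, carrying up to about $k$ red cherries) survive. This is precisely the regime in which Alice can reach reds and where the $\tfrac14$ loss in the bound is generated; your model confines it to the tiny endgame ``one beam plus $\zeta$''. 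Consequently your claims (i) (Bob gets a red from every beam) and (ii) (at most $\tfrac{k+1}{2}$ beams are split) --- which do give the right count $2k-\tfrac{k+1}{2}=\tfrac{3k-1}{2}$ --- are exactly the statements that still need a proof in the half-plane phase, and your argument for them, as well as your no-FAIL argument, leans on the incorrect description of $\Ex(C)$. The second gap is that the engine of your proof, the potential $\Phi$ with the one-move inequality $\W(b)+\Phi(C\setminus\{a,b\})\ge\Phi(C)$, is never constructed or verified; you explicitly defer it as ``the real work,'' so the proposal stops short of a proof.

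For comparison, the paper splits the game by whether a bounding half-plane through $\zeta$ exists. Before it exists, every extremal cherry is green and Bob pockets one red per round (Lemma~\ref{invariantfirstphase} and the induction in Lemma~\ref{halfplayfirst}). After it exists, the key invariant is that at Alice's turn no beam is ``semi-exposed'' (two reds, exactly one extremal; Lemma~\ref{invariant}), which guarantees that whenever Alice takes a red from a two-red beam Bob can immediately take its partner; this yields the phase-two bound of $\tfrac12\bigl(\R(C)-\Sle(C)\bigr)$ (Lemma~\ref{halfplaysecond}), and combining the phases gives Bob at least $\R(C)-\Tot(U\cap C)$ for the bounding half-plane $U$. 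The theorem then follows from the purely geometric count $\Tot(U\cap\Su_k)\le\tfrac{k+1}{2}$, which is the rigorous counterpart of your claim (ii). If you want to salvage your scheme, the semi-exposed invariant and the half-plane bookkeeping are the ingredients your potential function would have to encode.
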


We approach the proof as follows. We let Bob follow the Careful greedy tactic in all his moves. Alice can do anything.

We always describe the game state by the set of remaining cherries $C \subseteq \Su_k$ and create a lower bound for how many red cherries Bob is guaranteed to obtain from this moment until all cherries are taken. We characterize the set $C$ by the existence of a certain line (see Definition~\ref{defihalfplanes}) and by quantities $\R(C)$, $\Sle(C)$, and $\Tot(C)$ (see Definition \ref{defisingles}).

We start our proof with Lemma~\ref{halfplaysecond} in order to calculate what will happen in the second phase of the game. Lemma~\ref{halfplayfirst} then analyzes the first phase of the game while using the result of Lemma~\ref{halfplaysecond} in order to obtain the sum of Bob's score over both phases. In the end, we prove Theorem~\ref{maintheorem} as a straightforward corollary of Lemma~\ref{halfplayfirst}.

\pagebreak
\begin{definition} \label{defihalfplanes}
    In a moment of a gameplay on the sun $\Su_{k}$, denote $C \subseteq \Su_{k}$ as the set of remaining cherries. For all lines that pass through $\zeta$, we define the set $\mathcal{U}_{\.C}$ as the set of all closed half-planes defined by these lines.
    If $C \subseteq U \in \mathcal{U}_{\.C}$, then $U$ is called a \textit{bounding half-plane} for $C$.
\end{definition}

\begin{lemma} \label{invariantfirstphase}
    If Bob has been following the Careful greedy tactic from the beginning of the game on the sun $\Su_k$, then, for $C \subseteq \Su_k$ in a moment of gameplay when it is Alice's turn and a bounding half-plane does not yet exist, we have:
        \begin{enumerate}
            \item $\zeta \notin \Ex(C)$
            \item Each beam $\Y$ is either fully remaining (that is $\Y \subseteq C$), or fully removed (that is $\Y \cap C = \emptyset$), or exactly the two innermost cherries (one green, one red) remain.
        \end{enumerate}
\end{lemma}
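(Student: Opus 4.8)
The plan is to establish the two parts of the statement one after another; part~(1) is in fact independent of the tactic. If $\zeta \in \Ex(C)$, then $\zeta \notin \Conv(C \setminus \{\zeta\})$, so the separation theorem yields a line strictly separating $\zeta$ from the compact convex set $\Conv(C \setminus \{\zeta\})$; sliding this line parallel to itself until it passes through $\zeta$ produces a line through $\zeta$ one of whose closed half-planes contains all of $C$, i.e.\ a bounding half-plane for $C$. Contrapositively, the absence of a bounding half-plane forces $\zeta \notin \Ex(C)$, which is part~(1).

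For part~(2) I would induct on the number of moves already played, equivalently on $|\Su_k| - |C|$. In the base case $C = \Su_k$ every beam is fully remaining, and no bounding half-plane exists because the $k \ge 3$ beams are spread evenly around $\zeta$. For the inductive step, let $C$ be the set of remaining cherries at some later Alice turn with no bounding half-plane, let $a$ be Alice's preceding move and $b$ Bob's, and put $C' = C \cup \{a, b\}$, which is again an Alice-turn state reached while Bob followed the Careful greedy tactic. Since $C \subseteq C'$, any bounding half-plane for $C'$ would also bound $C$; hence $C'$ has no bounding half-plane, and the induction hypothesis together with part~(1) tells us that at $C'$ every beam is full, empty, or reduced to its two innermost cherries, and $\zeta \notin \Ex(C')$.

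The core of the proof is then a description of $\Ex(C')$. Using the position assumptions in the definition of $\Su_k$ --- in particular the third bullet, combined with the fact that the remaining cherries do not all lie in a closed half-plane bounded by a line through $\zeta$ --- one verifies that the extremal cherries of $C'$ are exactly the outermost remaining cherry of each non-empty beam, and that for both a full beam and a reduced beam this outermost cherry is green. Since also $\zeta \notin \Ex(C')$, Alice is forced to remove the outermost (green) cherry of some beam $\Y$, turning $\Y$ into a three-cherry beam (if $\Y$ was full) or a one-cherry beam (if $\Y$ was reduced) and exposing on $\Y$ one new extremal cherry, which is red; by the same assumptions this is the only extremal red cherry of $C' \setminus \{a\}$. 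Hence the Careful greedy tactic makes Bob take a red cherry, and the branch into which the tactic falls forces that red cherry onto $\Y$: if $\Y$ is the three-cherry beam, it is the only beam carrying both an extremal red cherry and a non-extremal red one, so Bob takes its extremal red cherry and $\Y$ becomes reduced; if $\Y$ is the one-cherry beam, it is the only beam with a single extremal red cherry, so Bob takes that cherry and $\Y$ becomes empty. Either way $C = (C' \setminus \{a\}) \setminus \{b\}$ again has every beam full, empty, or reduced, and part~(1) gives $\zeta \notin \Ex(C)$; this closes the induction.

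The step I expect to be the real work is the description of $\Ex(C')$: one must use only the stated geometric hypotheses to rule out that $\zeta$, or any cherry of a beam other than its outermost remaining one, is extremal as long as no bounding half-plane has appeared. This is precisely what pins Alice's move down to a green outermost cherry and, more importantly, what guarantees that Bob's response repairs the very beam Alice disturbed rather than cracking open a second one; as a by-product it also shows that during this phase the Careful greedy tactic never reaches its last two branches (``take any extremal red'' and the failure branch).
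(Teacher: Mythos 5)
Your proposal is correct and follows essentially the same route as the paper: part~(1) by the hyperplane separation theorem, and part~(2) by induction on the number of moves, using that in this phase every extremal cherry is the outermost (green) cherry of a non-empty beam, so Alice must take such a green and the Careful greedy tactic forces Bob onto the uniquely revealed extremal red of the same beam. The only differences are organizational (you run the induction backward from $C$ to $C' = C \cup \{a,b\}$ and use monotonicity of bounding half-planes, where the paper argues forward), and your explicit flagging of the geometric verification of $\Ex(C')$ is left at the same level of detail as in the paper itself.
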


\begin{proof} 
    Item (1): From the hyperplane separation theorem; if $\zeta$ were an extremal point of $C$, there would be a bounding half-plane going directly through $\zeta$.
    
    Item (2): We proceed by induction on the number of taken cherries.
    Base case: $\Su_k$ satisfies the properties since each beam is fully remaining.
    Assume that this holds up till some set $C_0 \subseteq \Su_k$, and it is Alice's turn.
    Induction step: From $C_0$ Alice can only take a green cherry from some beam $\Y$. The beam $\Y$ is either fully remaining, or has exactly the two innermost cherries remaining by the induction hypothesis. After Alice takes the green cherry, a red cherry will be revealed on beam $y$. This will be the only red cherry available, and so by following the Careful greedy tactic, Bob will take this cherry. Therefore beam $\Y$ will end up either fully removed, or with the two innermost cherries remaining. These two moves will give $C_1 \subset C_0$ where $|C_1| = |C_0| - 2$ which either will maintain all properties or a bounding half-plane will have emerged.  
\end{proof}

\begin{lemma} \label{lemmanofail}
    If Bob always follows the Careful greedy tactic, he will never reach FAIL. 
\end{lemma}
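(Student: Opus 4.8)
The plan is to show that whenever it is Bob's turn, the Careful greedy tactic always has a legal move to make — i.e.\ the final \texttt{NO $\longrightarrow$ FAIL!} branch is never reached. Reading the tactic, FAIL happens only when \emph{both} of the following hold: (i) there is no extremal red cherry in the current set $C$, and (ii) there is no beam that has at least one green cherry but no red cherry on it. So the whole proof reduces to: assuming it is Bob's turn and (i) holds, I must produce a beam witnessing the negation of (ii), namely a remaining beam whose remaining cherries are all green. The obstruction I expect to be the crux is ruling out the ``bad'' configuration in which every remaining beam still has a red cherry on it yet none of those red cherries is currently extremal — I need to argue the geometry of the sun forbids this on Bob's turn.

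The argument splits along whether a bounding half-plane (Definition~\ref{defihalfplanes}) exists yet. \emph{First phase (no bounding half-plane):} Here I invoke Lemma~\ref{invariantfirstphase}. Its item~(2) says each beam is fully remaining, fully removed, or has exactly its two innermost cherries (one green, one red) left. But Lemma~\ref{invariantfirstphase} is stated for Alice's turns; so I first observe that on Bob's turn we are one Alice-move past such a state: Alice took a green cherry from some beam $\Y$ (the only moves available to her by the invariant), exposing a red cherry on $\Y$. Recall the beam order is $[\text{green},\text{red},\text{green},\text{red}]$ from outside in — so removing green cherries from the outside always exposes a red one — meaning after Alice's move that newly exposed red cherry on $\Y$ is the outermost remaining cherry of $\Y$, hence extremal by the second bullet of the sun's definition (the outermost cherry of a beam is always in $\Ex(\Su_k\setminus Y)$). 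Thus (i) fails: there \emph{is} an extremal red cherry, so FAIL cannot occur in the first phase.

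\emph{Second phase (a bounding half-plane $U$ exists):} Now $C\subseteq U$ for some closed half-plane $U$ through $\zeta$, and by the third bullet of the sun's definition each such line through two cherries of a beam separates the remaining beams cleanly, so on one side of the boundary line there are at most a few beams' worth of cherries in play; the relevant point is that the convex hull of $C$ is ``spread out'' in a controlled way. I would argue: among the remaining beams, take one that attains the extreme position along the boundary direction of $U$. On that beam, its outermost remaining cherry is extremal in $C$ (again by the outermost-cherry bullet, since only a proper subset of the beam can be missing once the beam is non-empty). If every remaining non-empty beam still contained a red cherry, consider whether that extremal outermost cherry is red or green. If it is red, (i) fails and we are done. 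If it is green, then because the beam pattern is $[\text{green},\text{red},\text{green},\text{red}]$, the cherries of this beam remaining below the outermost green one begin with a red — and I need to check that this red, or the red on \emph{some} extreme beam, is itself extremal; here I would use that $\Y_i\cup\{\zeta\}$ is in convex position (last clause of the sun's definition) to promote the second cherry of the beam to extremal once the first has been exposed. So in every case either (i) fails or some beam is all-green, contradicting the conjunction (i)$\wedge$(ii); hence FAIL is unreachable. Packaging the case analysis cleanly — especially pinning down exactly which cherry on the ``extreme'' beam is guaranteed extremal in terms of the sun's three geometric bullets — is the only delicate part; the rest is bookkeeping on the fixed beam pattern.
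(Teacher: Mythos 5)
Your plan matches the paper's proof of Lemma~\ref{lemmanofail}: split on whether a bounding half-plane exists, dispose of the first phase via Lemma~\ref{invariantfirstphase} (on Bob's turn Alice has just exposed an extremal red, so the FAIL branch is never consulted), and in the second phase exhibit a remaining beam all of whose remaining cherries are extremal, so that the absence of extremal reds forces that beam to be all green and Bob can take a green from it. The geometric claim you defer as the \emph{delicate part} --- that on an extreme beam relative to the bounding half-plane every remaining cherry, in particular any remaining red, is extremal --- is precisely the assertion the paper's own proof states without further elaboration, so your proposal is essentially the published argument (just phrase the phase-two conclusion as ``some nonempty beam has all its remaining cherries in $\Ex(C)$'' rather than via the outermost cherry's colour, since the next remaining cherry below an outermost green need not be red).
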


\begin{proof}
    Before a bounding half-plane emerges, this is clear from Lemma~\ref{invariantfirstphase}.
    After a bounding half-plane emerges, leaving $C \subset \Su_{k}$, there will always be a beam $\Y$ such that all remaining cherries of $\Y$ are in $\Ex(C)$; therefore, if there are no extremal red cherries, the beam $\Y$ will have at least one extremal green cherry and no red cherries --- Bob can take a green cherry here. Therefore, Bob will never reach the FAIL branch. 
\end{proof}

\begin{definition}
    A beam $\Y$ is \textit{semi-exposed} in $C$ if $|\Y \cap \RR(C)| = 2$ and, at the same time, $|\Y \cap \RR(C) \cap \Ex(C)| = 1$.
\end{definition}

\begin{lemma} \label{invariant}
    If Bob has been following the Careful greedy tactic from the beginning of the game on the sun $\Su_k$, then, when it is Alice's turn, there will never be a semi-exposed beam.
\end{lemma}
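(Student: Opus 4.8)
The plan is to argue by contradiction, examining the first moment the invariant is violated. Suppose it fails, and among all configurations on Alice's turn that are reached while Bob follows the Careful greedy tactic, let $C''$ be one reached in the fewest moves that contains a semi-exposed beam $\Y$. Let $r_o$ and $r_i$ be the two red cherries of $\Y$, with $r_o$ the one nearer the outside, and let $g_o$ denote the (green) cherry of $\Y$ lying immediately outside $r_o$, which is the outermost cherry of the beam. Since a beam whose two reds are both present can have its inner red extremal only when its outer red is extremal as well (both would then lie on the convex chain through the centre $\zeta$), the extremal red of $\Y$ in $C''$ is $r_o$; thus $r_o, r_i \in C''$, $r_o \in \Ex(C'')$ and $r_i \notin \Ex(C'')$. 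The configuration $\Su_k$ itself is not a counterexample: by the construction of the sun (see the proof of Lemma~\ref{invariantfirstphase}, where at the start Alice's only legal moves are green), $\Ex(\Su_k)$ consists only of the outermost, green cherry of each beam, so no beam is semi-exposed there. Hence $C''$ is reached right after a move of Bob, which is itself reached right after a configuration $C$ on Alice's turn that, by minimality, contains no semi-exposed beam; write $a$ for Alice's move out of $C$ and $b$ for Bob's, so that $C' = C \setminus \{a\}$ and $C'' = C' \setminus \{b\}$, and note $C'' \subseteq C' \subseteq C$.

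First I would record the monotonicity of extremality: if $p \in \Ex(S)$ and $p \in S' \subseteq S$, then $p \in \Ex(S')$. Both reds of $\Y$ lie in $C$, and since $\Y$ is not semi-exposed in $C$, either none or both of them lie in $\Ex(C)$; if both did, then $r_i \in \Ex(C) \subseteq \Ex(C'')$, contradicting $r_i \notin \Ex(C'')$, so neither does. Consequently $r_o$ turns extremal during move $a$ or move $b$, while by monotonicity $r_i$ is never extremal in $C$, $C'$ or $C''$. Here I would invoke two geometric properties of the sun, both consequences of the ``far enough from the centre'' and line conditions in its definition: \textbf{(P1)}, if deleting a single cherry $c$ from a configuration $C \subseteq \Su_k$ makes some red cherry extremal and $c$ does not lie on that red cherry's beam, then the other red cherry of that beam, if still present, becomes extremal at the same moment; and \textbf{(P2)}, on a single beam the outer red cherry can only be exposed by deleting the (green) cherry immediately outside it. Since $r_i$ never becomes extremal, \textbf{(P1)} forces the move that exposes $r_o$ to delete a cherry of $\Y$ itself, and then \textbf{(P2)} identifies that move as ``take $g_o$''.

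Next I would rule out that this move is $b$: Bob's tactic selects a green cherry only in the branch where no red cherry is extremal, and then only from a beam that carries no red cherry, whereas $\Y$ still carries both its reds in $C'$. So the exposing move is $a$, i.e.\ $a = $ ``take $g_o$'', and therefore $\Y$ is semi-exposed in $C'$ ($r_o$ extremal, $r_i$ present and not extremal). Facing $C'$, Bob sees an extremal red cherry on a beam that also has a non-extremal red cherry (namely $\Y$), so the Careful greedy tactic makes him take the extremal red cherry of some beam of that kind --- necessarily a semi-exposed beam $\Y'$. If $\Y' = \Y$, then $r_o \notin C''$, so $\Y$ has at most one red cherry in $C''$ and is not semi-exposed there, a contradiction. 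If $\Y' \neq \Y$, then $\Y'$ is semi-exposed in $C'$ but not in $C$, with both its reds present in $C$; by the argument of the previous paragraph its outer red was turned extremal by the single move $a$, which deletes a cherry of $\Y \neq \Y'$, so \textbf{(P1)} forces the inner red of $\Y'$ to have become extremal as well, contradicting semi-exposedness of $\Y'$ in $C'$. Every case is impossible, so no such $C''$ exists.

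I expect the crux to be proving \textbf{(P1)} and \textbf{(P2)}, that is, ruling out ``action at a distance'' in the convex hull of the sun: deleting one cherry cannot newly expose a red cherry on an unrelated beam without simultaneously exposing that beam's other red (the only mechanism by which a distant beam's reds get exposed is that the deletion creates a bounding half-plane, which unlocks a whole beam at once, both reds included). This is exactly what the metric conditions in the definition of $\Su_k$ are there to guarantee, and I expect it to follow from the same half-plane / separation reasoning already used in Lemmas~\ref{invariantfirstphase} and~\ref{lemmanofail}; everything else is bookkeeping dictated by the definition of the Careful greedy tactic.
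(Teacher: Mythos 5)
Your overall skeleton (first violation after a Bob move, Bob never creates semi-exposure because he only takes greens from red-free beams, Alice creates at most one semi-exposed beam per move, and Bob's top priority removes it) is the same as the paper's, but the geometric claims you insert to drive the case analysis are not just unproven --- the crucial one is false, and it hides exactly the case the paper's proof is careful to include. Your \textbf{(P1)} asserts that a deletion off a beam can never expose exactly one red of that beam, and from it you deduce that a semi-exposed beam must have its \emph{outer} red extremal and that the exposing move must be ``take $g_o$''. But the centre cherry $\zeta$ lies on no beam, Alice is free to take it, and removing $\zeta$ can expose exactly one red --- the \emph{inner} one --- of a beam that still holds both reds. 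Concretely, realize each beam as four points at increasing radius on a slightly curved arc (as the definition of $\Su_k$ requires for convex position with $\zeta$), and consider the position where only $\zeta$ and two adjacent full beams remain: the inner red $r_i$ of one beam lies inside the triangle spanned by $\zeta$, that beam's inner green, and the innermost cherry of the neighbouring beam, so it is not extremal; after $\zeta$ is removed, $r_i$ becomes extremal while $r_o$ stays covered by a triangle spanned by $r_i$, $g_o$, and an outer cherry of the neighbouring beam. One can check such a configuration satisfies every condition in the sun's definition, so both \textbf{(P1)} and your opening claim ``the extremal red of a semi-exposed beam is $r_o$'' fail, and your chain ``exposure must come from within $\Y$, hence $a=$ take $g_o$'' breaks: the case in which Alice semi-exposes a beam by taking $\zeta$ is simply not covered.

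The paper avoids this by proving only the weaker, correct statement: a single deletion either reveals no cherry, reveals a whole beam, or reveals a single cherry, and in the last case a semi-exposed beam can be created either by taking $\zeta$ or by taking a green from a beam with two reds --- both moves Bob's tactic forbids, and either way at most one semi-exposed beam appears per move, which Bob's top-priority rule removes immediately. If you want to salvage your write-up, replace \textbf{(P1)}/\textbf{(P2)} by that classification (in particular, drop the claim that the extremal red is $r_o$ and add the branch $a=\zeta$); note also that you yourself flag \textbf{(P1)}/\textbf{(P2)} as ``the crux'' still to be proved, so even absent the counterexample the argument as submitted is incomplete.
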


\begin{proof} 
    Assume to the contrary that a semi-exposed beam exists after a move by Bob and that it is the first time this happens. Lemma~\ref{invariantfirstphase} shows it cannot happen before a bounding half plane has emerged. Suppose now we are at a point in the gameplay when a bounding half-plane exists and there is a semi-exposed beam after Bob's move.
    
    In any moment of a gameplay on the sun $\Su_{k}$, removing a cherry can either (1) reveal a beam in full, or (2) reveal no new cherries, or (3) reveal a single cherry. Clearly, neither (1) nor (2) can produce a semi-exposed beam. 
    
    In (3), a semi-exposed beam can be produced either by taking the centre cherry $\zeta$, or by taking a green cherry from a beam which contains two red cherries (this beam then becomes semi-exposed). Neither of these moves can be performed by Bob because the Careful greedy tactic allows taking a green cherry only if it is from a beam and this beam does not contain any red cherries. We are left only with the option that it was Alice who generated a semi-exposed beam by (3).
    
    Since taking a red cherry from a semi-exposed beam is the top priority in Bob's Careful greedy tactic, the only possible reason for Bob leaving a semi-exposed beam is if Alice leaves two semi-exposed beams after her turn. Alice can produce a \hbox{semi-exposed} beam by (3); however, (3) can only produce a single semi-exposed beam at a time. Therefore, there was at least one semi-exposed beam before Alice's move, which is a contradiction with this being the first time there is a semi-exposed beam after Bob's move.
\end{proof}

\begin{definition}  \label{defisingles}
    In a moment of a gameplay on the sun $\Su_{k}$, denote $C \subseteq \Su_{k}$ as the set of remaining cherries. We define $\Sle(C)$ as the number of beams in $C$ which have a \textit{single} remaining red cherry. Furthermore, we define $\Tot(C)$ as the number of beams in $C$ which have \textit{at least one} remaining red cherry.
\end{definition}

\begin{lemma} \label{halfplaysecond}
    Let $C \subseteq \Su_k$ be a remaining subset of the sun obtained by Bob following the Careful greedy tactic such that $|C|$ is odd and a bounding half-plane for $C$ exists. It is now Alice's turn. 
       
    From the set $C$, Bob will obtain at least $\frac{1}{2}(\R(C) - \Sle(C))$ red cherries from now until the end of the game (using the Careful greedy tactic).
\end{lemma}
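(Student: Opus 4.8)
\section*{Proof proposal for Lemma~\ref{halfplaysecond}}

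The plan is to prove the statement by induction on $|C|$, which stays odd for the rest of the game. In the base case $|C| = 1$ it is Alice's turn, she removes the last cherry and Bob gains nothing; but then $\R(C) - \Sle(C) = 0$ — the surviving cherry is $\zeta$, or green, or the sole remaining red of its beam, and that beam is counted by $\Sle$ — so $0 \ge \tfrac12\bigl(\R(C) - \Sle(C)\bigr)$ holds.

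For the inductive step assume $|C| \ge 3$ and let Alice play an arbitrary extremal cherry $a$, with Bob responding by the cherry $b$ prescribed by the Careful greedy tactic, which is a legal move by Lemma~\ref{lemmanofail}. Put $C' = C \setminus \{a, b\}$. Then $|C'| = |C| - 2$ is odd, it is again Alice's turn, Bob has followed the Careful greedy tactic throughout, and the bounding half-plane for $C$ contains $C' \subseteq C$, so it bounds $C'$ as well; hence the induction hypothesis applies and Bob collects at least $\tfrac12\bigl(\R(C') - \Sle(C')\bigr)$ red cherries from $C'$ onward. Thus Bob's total from $C$ onward is at least $\W(b) + \tfrac12\bigl(\R(C') - \Sle(C')\bigr)$, and since $\R(C) - \R(C') = \W(a) + \W(b)$, the desired bound follows once I establish
\[ \W(b) - \W(a) \;\ge\; \Sle(C') - \Sle(C). \]

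To handle the right-hand side I would use the elementary fact that deleting one cherry changes $\Sle$ by $0$ if that cherry is $\zeta$ or green, by $+1$ if it is a red cherry lying on a beam with two reds, and by $-1$ if it is the only red on its beam. Writing $\delta_a, \delta_b \in \{-1, 0, 1\}$ for the changes induced by $a$ and by $b$ (so the display reads $\W(b) - \W(a) \ge \delta_a + \delta_b$), I split on Alice's move. If $a$ is $\zeta$ or green then $\W(a) = \delta_a = 0$: if an extremal red survives her move Bob takes a red and $\delta_b \le 1$ closes the case, and otherwise the tactic has Bob take a green off a beam with no red, so $\W(b) = \delta_b = 0$ and the inequality is tight. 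If $a$ is red, an extremal red is present; when $a$ is the only red on its beam we have $\delta_a = -1$, so $\delta_a + \delta_b \le 0 = \W(b) - \W(a)$ whenever $\W(b) = 1$, and if instead $\W(b) = 0$ then Bob found no extremal red after Alice's move, which — since the surviving red of a two-red beam stays extremal after an extremal red is removed (by Lemma~\ref{invariant}, which rules out the semi-exposed case, together with monotonicity of $\Ex$ under deletion) — forces Alice's beam to have been a single-red beam, giving $\delta_a = -1$, $\delta_b = 0$, matching $\W(b) - \W(a) = -1$.

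The one delicate subcase — and the crux of the argument — is when Alice removes a red $a$ from a beam $\Y$ that held \emph{two} reds, so $\delta_a = +1$. Lemma~\ref{invariant} forbids $\Y$ from being semi-exposed, and since $a$ is an extremal red this forces \emph{both} reds of $\Y$ to have been extremal; hence after Alice's move $\Y$ is a single-red beam whose red is still extremal (monotonicity of $\Ex$ under deletion). Furthermore, deleting a red cherry never creates a semi-exposed beam — this is exactly the case analysis in the proof of Lemma~\ref{invariant}, where only deleting $\zeta$ or deleting a green from a two-red beam can do so — so, since there was no semi-exposed beam before Alice's move, there is still none afterward. Consequently the first branch of the Careful greedy tactic does not fire and Bob is routed to the ``beam with a single extremal red cherry'' branch, so $b$ is the last red of some single-red beam, $\delta_b = -1$, and $\delta_a + \delta_b = 0 = \W(b) - \W(a)$. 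I expect this subcase to be the main obstacle: the danger is that Alice, by stripping a two-red beam, could simultaneously leave Bob forced by a semi-exposed beam to strip a \emph{second} two-red beam, pushing $\delta_a + \delta_b$ up to $2$; the two observations about semi-exposed beams above are precisely what exclude this. The remaining points — that the bounding-half-plane hypothesis descends to $C'$ and that the FAIL branch is never reached — are immediate from the cited lemmas.
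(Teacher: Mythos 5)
Your proof is correct and follows essentially the same route as the paper: induction on $|C|$ with a case analysis on Alice's move (green/$\zeta$ versus red from a single-red or two-red beam), using Lemma~\ref{invariant} to force both reds of a two-red beam to be extremal, the only difference being the cosmetic reformulation via the inequality $\W(b)-\W(a)\ge \Sle(C')-\Sle(C)$. In fact, in the two-red-beam case you are more explicit than the paper (which simply asserts Bob takes the other red of the same beam): you correctly identify and rule out the danger that a semi-exposed beam elsewhere could route Bob's tactic to strip a second two-red beam, and you note that Bob's red may come from any single-red beam, which suffices for the accounting.
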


\begin{proof}
    This can be proved by induction on $|C|$.
    
    If $|C| \le 1$, then $\R(C) = \Sle(C)$, thus the statement holds trivially (it says that Bob will get at least $0$ red cherries). 
    
    If $|C| > 1$, then we assume the statement holds for $|C'| = |C| - 2$. We proceed by case analysis.

    \begin{itemize}[leftmargin=10mm]
        \item If Alice first takes a green cherry, then the lemma holds no matter what Bob does.
        \begin{itemize}[leftmargin=6mm]
            \item If Bob proceeds by taking a green cherry as well, then we see $\R(C') = \R(C)$ and $\Sle(C') = \Sle(C)$, thus it reduces exactly to the induction hypothesis.
            \item If Bob proceeds by taking a red cherry, then we observe $\R(C') = \R(C) - 1$ and $|\Sle(C') - \Sle(C)| \le 1$, thus $\frac{1}{2}(\R(C') - \Sle(C')) \ge \frac{1}{2}(\R(C) - \Sle(C)) - 1$. By the induction hypothesis, Bob will obtain at least $\frac{1}{2}(\R(C) - \Sle(C)) - 1$ red cherries in the future. Since Bob has just taken one red cherry, Bob obtains at least $\frac{1}{2}(\R(C) - \Sle(C))$ red cherries in total.
        \end{itemize}
        \item If Alice first takes a red cherry, then we need to consider the properties of the game state and Bob's strategy in order to show that the lemma holds.  
        \begin{itemize}[leftmargin=6mm]
            \item If Alice took the red cherry from a beam with only this red cherry, then we see $\R(C') = \R(C) - 1$ and $\Sle(C') = \Sle(C) - 1$, thus $\frac{1}{2}(\R(C) - \Sle(C)) = \frac{1}{2}(\R(C') - \Sle(C'))$. The rest follows by applying the induction hypothesis in the same way as in the previous cases.
            \item If, prior to Alice's move, there were two red cherries on the beam, then Bob's Careful greedy tactic leads to taking the other red cherry from the same beam. By Lemma~\ref{invariant}, the second red cherry is guaranteed to be extremal. This gives $\R(C') = \R(C) - 2$ and $\Sle(C') = \Sle(C)$, thus the induction hypothesis guarantees that Bob will be able to obtain at least $\frac{1}{2}(\R(C') - \Sle(C')) = \frac{1}{2}(\R(C) - \Sle(C)) - 1$ future red cherries; and, since Bob has just taken one red cherry, Bob obtains at least $\frac{1}{2}(\R(C) - \Sle(C))$ red cherries in total. \qedhere
        \end{itemize} 
    \end{itemize} 
\end{proof}

\begin{lemma} \label{halfplayfirst}
    Let $C \subseteq \Su_k$ be a remaining subset of the sun obtained by Bob following the Careful greedy tactic such that $|C|$ is odd and a bounding half-plane for $C$ does not yet exist. It is now Alice's turn. 
        
    There exists a half-plane $U \!\in \mathcal{U}_{\.C}$ such that Bob will obtain at least $\R(C) - \Tot(U \cap C)$ red cherries from now until the end of the game by using the Careful greedy tactic.
\end{lemma}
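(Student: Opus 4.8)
The plan is to run the game forward to the first moment a bounding half-plane appears, and to glue Lemma~\ref{invariantfirstphase} (which controls play before that moment) to Lemma~\ref{halfplaysecond} (which controls play after it). The half-plane the statement asks for will be fixed \emph{before} we know how Alice plays: take $U \in \mathcal{U}_C$ to be a half-plane maximizing $\Tot(U \cap C)$, and argue that this single $U$ is good against every strategy of Alice.

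So fix an arbitrary strategy for Alice; Bob plays the Careful greedy tactic throughout (he never reaches FAIL, by Lemma~\ref{lemmanofail}). As long as no bounding half-plane exists and it is Alice's turn, the analysis of the first phase in Lemma~\ref{invariantfirstphase} pins down the shape of a round: Alice must take the outermost cherry of some beam --- necessarily green --- which uncovers exactly one red cherry on that beam, and this red is then the unique extremal red, so Bob takes it. Such a round turns one beam from ``full'' (two reds) into ``two innermost'' (one red), or from ``two innermost'' into ``empty'', and it deletes exactly one red cherry, namely Bob's. The key observation is that the collection of non-empty beams changes only when a beam becomes empty, and $\zeta$ is never extremal --- hence never removed --- before a bounding half-plane exists; so a bounding half-plane can emerge only right after Bob finishes a round. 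Because the game is finite and the empty set trivially has a bounding half-plane, one such round is the last of this first phase: afterwards the remaining set $C^\ast$ has a bounding half-plane $U^\ast \in \mathcal{U}_C$, it is Alice's turn, $|C^\ast|$ is odd, every beam of $C^\ast$ is still ``full'', ``two innermost'', or ``empty'' (hence has $0$, $1$, or $2$ reds), and Bob has so far collected $\R(C) - \R(C^\ast)$ red cherries.

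Now Lemma~\ref{halfplaysecond} applies to $C^\ast$: from there Bob collects at least $\tfrac12\bigl(\R(C^\ast) - \Sle(C^\ast)\bigr)$ more red cherries. Letting $n_j$ be the number of beams of $C^\ast$ with exactly $j$ reds, we have $\R(C^\ast) = n_1 + 2n_2$, $\Sle(C^\ast) = n_1$ and $\Tot(C^\ast) = n_1 + n_2$, so this bound equals $n_2 = \R(C^\ast) - \Tot(C^\ast)$. Adding the reds Bob already has, his total from the state $C$ onward is at least
\[ \bigl(\R(C) - \R(C^\ast)\bigr) + \bigl(\R(C^\ast) - \Tot(C^\ast)\bigr) = \R(C) - \Tot(C^\ast). \]
Since $\Tot$ is monotone under inclusion of cherry sets and $C^\ast \subseteq C$, we get $\Tot(C^\ast) = \Tot(U^\ast \cap C^\ast) \le \Tot(U^\ast \cap C) \le \Tot(U \cap C)$, the last inequality because $U^\ast \in \mathcal{U}_C$ and $U$ was chosen to maximize $\Tot(U \cap C)$. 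Hence Bob obtains at least $\R(C) - \Tot(U \cap C)$ red cherries, as claimed. (The same bookkeeping can instead be organized as an induction on $|C|$, with the ``$C^\ast$ reached'' situation playing the role of the base case via Lemma~\ref{halfplaysecond}.)

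The main obstacle is the quantifier order: the lemma wants one half-plane $U$ that works against all of Alice's strategies, yet the bounding half-plane $U^\ast$ that actually emerges depends on Alice. Choosing $U$ to maximize $\Tot(U\cap C)$ upfront is exactly what lets a single $U$ dominate every possible $U^\ast$. The secondary thing to nail down is that a bounding half-plane can surface only after a complete round, so that $C^\ast$ lands on Alice's turn with $|C^\ast|$ odd --- precisely the hypotheses Lemma~\ref{halfplaysecond} needs --- and this rests on the facts that trimming a beam's outer cherries leaves the set of non-empty beams unchanged and that $\zeta$ survives the entire first phase.
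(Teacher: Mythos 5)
Your proposal follows essentially the same route as the paper: both arguments reduce the first phase (no bounding half-plane) to rounds in which Alice takes a green cherry and Bob answers with the red it reveals, and both finish by invoking Lemma~\ref{halfplaysecond} once a bounding half-plane is present, converting its bound via the identity $\Sle = 2\cdot\Tot - \R$ (your $n_1,n_2$ count is exactly this identity). The organizational difference is that the paper runs an induction on $\R(C)$ in which the half-plane $U$ is inherited from the induction hypothesis, so it may depend on Alice's play, whereas you fix $U$ in advance as a maximizer of $\Tot(U \cap C)$ and dominate whatever bounding half-plane actually emerges. This buys the literal ``$\exists U$ working against every strategy of Alice'' reading of the statement; the application in Theorem~\ref{maintheorem} does not need this strengthening, since there $\Tot(U \cap \Su_k)$ is bounded uniformly over all $U$.

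One step of your write-up is not justified as stated: the claim that a bounding half-plane can emerge only right after Bob completes a round, ``because the collection of non-empty beams changes only when a beam becomes empty.'' Whether a closed half-plane through $\zeta$ contains all remaining cherries depends on the angular positions of the individual remaining cherries, not merely on which beams are non-empty, and the stated properties of the sun do not obviously rule out that Alice's removal of the outermost remaining green of a beam already creates a bounding half-plane while that beam still holds a red. Fortunately the claim is dispensable: define $C^\ast$ as the first state at Alice's turn that admits a bounding half-plane (such a state exists, since at Alice's turns the size decreases by two down to a single cherry). By minimality, every complete round before $C^\ast$ starts without a bounding half-plane, so in it Alice takes a green cherry and Bob, seeing an extremal red, takes a red; hence Bob owns all $\R(C) - \R(C^\ast)$ red cherries removed before $C^\ast$ even if a half-plane surfaced mid-round, and your $n_1,n_2$ identity holds for an arbitrary subset of the sun, so the (unused) assertion about beam states at $C^\ast$ can simply be dropped. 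With this adjustment the argument is correct; the paper's round-by-round induction sidesteps the timing issue automatically because its case split is evaluated only at the start of each round.
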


\begin{proof}
    We proceed by induction on $\R(C)$. Note that all extremal cherries are green, by Lemma~\ref{invariantfirstphase}, and each of them lies on a beam that has a red cherry (in particular, the beam has the same number of green and red cherries).
    
    Bob's Careful greedy tactic dictates to always take the neighboring red cherry from the same beam as Alice just took her cherry from. Finally, as our base case, we utilize Lemma~\ref{halfplaysecond} once a bounding half-plane emerges.
    
    Induction step: Consider $U$ from the induction hypothesis. Let $C'$ be the set of cherries remaining from $C$ after Alice's move and Bob's move, $|C'| = |C| - 2$. The induction hypothesis provides that Bob will get at least $\R(C') - \Tot(U \cap C')$ cherries during the remainder of the game. Since $\R(C) = \R(C') + 1$ and $\Tot(U \cap C) \ge \Tot(U \cap C')$, the difference between the lemma statement and the number from the induction hypothesis~is at most one. However, Bob has just taken a red cherry, so the lemma statement is satisfied.
    
    Base case: Once a bounding half-plane emerges, Bob is guaranteed to obtain at least $\frac{1}{2}(\R(C) - \Sle(C))$ more red cherries by Lemma~\ref{halfplaysecond}. We set $U$ to be this bounding half-plane, thus $C = U \cap C$. We observe $\R(C) = 2 \cdot \Tot(C) - \Sle(C)$. In other words, $\Sle(C) = 2 \cdot \Tot(U \cap C) - \R(C)$. Using this equality, Lemma~\ref{halfplaysecond} can be rewritten as: Bob is guaranteed to obtain at least $\frac{1}{2}(\R(C) - (2 \cdot \Tot(U \cap C) - \R(C)))$ more red cherries. That is equal to $\R(C) - \Tot(U \cap C)$. This is what we wanted to prove.
\end{proof}

\noindent \textbf{Theorem \ref{maintheorem}}~(restated). \textit{
    From the sun $\Su_k$, Bob will get at least $\frac{3k-1}{2} = \frac{3}{4} \R(\Su_k) - \frac{1}{2}$ red cherries by using the Careful greedy tactic, no matter how Alice plays. As a result, we obtain the desired property $\M(\Su_k) \ge \frac{3}{4} \R(\Su_k) - \frac{1}{2}$.}

\begin{proof}
    Given the properties of $\Su_k$, we see that any half-plane \hbox{$U\!\.\! \in \mathcal{U}_{\.\Su_k}$} has 
    \[\Tot(U \cap \Su_k) \le \frac{k+1}{2} \]
    and thus, by Lemma~\ref{halfplayfirst}, Bob is guaranteed to obtain at least
    \[ \R(\Su_k) - \Tot(U \cap \Su_k) \,\ge\, 2k - \frac{k+1}{2} \,=\, \frac{3k-1}{2} \]
    red cherries using his Careful greedy tactic. This provides the lower bound for the minimax result.
\end{proof}
\bigskip

\section{Moon configuration} \label{moon}

We present a family of even-sized cakes which we call the moon configuration, on which Bob can easily obtain all red cherries.

\begin{definition}
	Let $n \in \mathbb{N} \setminus \{0, 1\}$. We define the \textit{moon} $\Lu_n$ (see Figure~\ref{fig:moon} for an example $\Lu_6$) as follows.
	
	Choose a centre point $S$ and draw two circles; $\alpha(S, 1)$, called outer; and $\beta(S, 1\!-\!\varepsilon)$, called inner, where $0 < \varepsilon < 1 - \cos \left( 90^\circ / n \right)$. Draw $n$ lines through $S$ such that they are rotationally symmetric with a period of $180^\circ/n$. Pick one line, called the main line. The main line defines two half-planes. The ``upper'' half-plane is discarded. The ``other'' half-plane will create the moon.
	
	Place a green cherry at each intersection of any line with the outer circle $\alpha$. Place a red cherry at each intersection of any line except the main line with the inner circle~$\beta$.
	We have constructed a cake $\Lu_n$ such that $\R(\Lu_n) = n-1$ and $\G(\Lu_n) = n+1$, where $\Ex(\Lu_n) = \GG(\Lu_n)$.
\end{definition}

\begin{figure}[h]
	\centering
	\includegraphics[width=12cm]{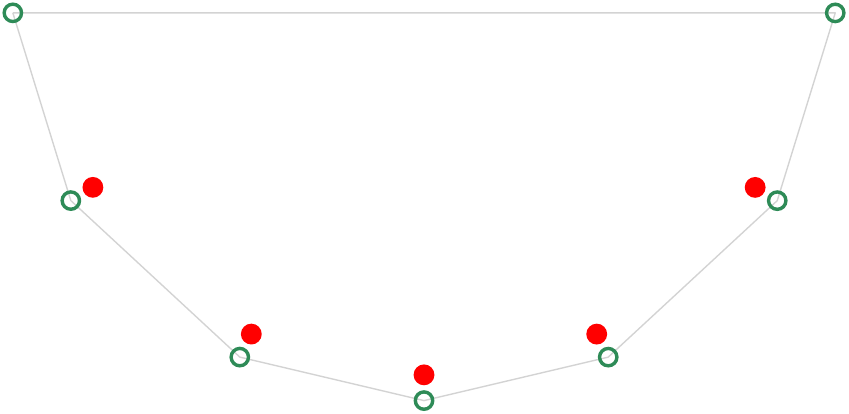}
	\caption{Moon ($\Lu_6$).}
	\label{fig:moon}
\end{figure}

\begin{observation} \label{moon-observations}
	We see that $\Lu_{n}$ has the following properties: the removal of a green cherry will reveal a (single) red cherry,  and the set $C \subset \Lu_{n}$, $|C| = |\Lu_n|-2$, obtained by the removal of a green cherry followed by a removal of a red cherry will be order-equivalent to $\Lu_{n-1}$.
\end{observation}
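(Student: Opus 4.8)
The plan is to make the figure precise with coordinates, read off the uncovered cherry from the local shape of the convex hull, and then match the resulting two-move set to $\Lu_{n-1}$ via the obvious bijection. The reveal part should be routine; the work lies in the order-equivalence, and more precisely in seeing that the bound on $\varepsilon$ controls a handful of ``non-obvious'' orientations.

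First I would put $S$ at the origin, take the main line horizontal, and keep the closed lower half-plane. With $\omega \coloneqq 180^\circ/n$, the green cherries then lie on the outer circle at the angles $180^\circ + k\omega$ for $k = 0, 1, \dots, n$ — call them $g_0, g_1, \dots, g_n$ in this angular order — and the red cherries lie on the inner circle at the angles $180^\circ + k\omega$ for $k = 1, \dots, n-1$, so that $r_k$ lies on the segment from $S$ to $g_k$ at distance $1 - \varepsilon$ from $S$. The green cherries are in convex position (they lie on a circle) and every red cherry lies strictly inside $\Conv(\GG(\Lu_n))$; this recovers $\Ex(\Lu_n) = \GG(\Lu_n)$, and it shows that when a green cherry $g$ with convex-hull neighbours $g^-, g^+$ is removed, the cherries that newly become extremal are exactly those lying in $\Conv(\{g^-, g, g^+\})$ strictly on the $g$-side of the chord $\overline{g^- g^+}$. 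For an interior green $g = g_i$ ($1 \le i \le n-1$) this cap lies in the angular wedge from $g_{i-1}$ to $g_{i+1}$: the chord $\overline{g_{i-1}g_{i+1}}$ subtends the central angle $2\omega$, hence is at distance $\cos\omega$ from $S$ in the direction of $g_i$, and since $\varepsilon < 1 - \cos(90^\circ/n) \le 1 - \cos\omega$, the point $r_i$ lies strictly inside the cap; no other cherry does — the other red cherries sit on radii toward $g_{i-1}$ or $g_{i+1}$ (hence strictly on the $S$-side of $\overline{g_{i-1}g_{i+1}}$) or outside the wedge, and the other green cherries lie on the circular arc outside the wedge. Thus $r_i$ is the only uncovered cherry. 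If $g$ is one of the extreme greens $g_0, g_n$, a similar but slightly more involved check (the relevant cap is now long and thin, and $\varepsilon$ is small enough that only one red cherry protrudes into it) shows the uncovered cherry is the red cherry behind the one interior neighbour of $g$ — namely $r_1$, respectively $r_{n-1}$. This is the first assertion.

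For the order-equivalence, let $r$ be the uncovered red cherry, so the two-move set is $C = \Lu_n \setminus \{g, r\}$. In each of the three cases above, $C$ consists of $n$ green cherries, still in convex position along an arc shorter than a semicircle, of which the two extreme ones carry no red cherry while each of the other $n - 2$ carries exactly one red cherry strictly inside its pocket $\Conv(\{h^-, h, h^+\})$ on the outer side of $\overline{h^- h^+}$, where $h^-, h^+$ are that green cherry's neighbours \emph{in $C$} (for the two greens flanking the gap left by $g$, this pocket is enlarged). This is exactly the incidence pattern of $\Lu_{n-1}$; moreover $1 - \varepsilon > \cos(90^\circ/n) > \cos(90^\circ/(n-1))$, so $\varepsilon$ is a legal parameter for $\Lu_{n-1}$ as well. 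I would then send green cherries to green cherries and red cherries to red cherries in angular order and check that this bijection is order-preserving. The orientations of three green cherries, and of two green cherries with a red cherry, are forced by convex position and the pocket condition; the remaining ones — a red cherry flanking the gap together with a further cherry, most delicately triples of the form $(r_{i-1}, r_{i+1}, \cdot)$ — are where the bound enters, and I would confirm by a short trigonometric estimate that $\varepsilon < 1 - \cos(90^\circ/n)$ keeps every red cherry strictly inside its (possibly enlarged) pocket and strictly beyond the corresponding base chord, so that all these orientations agree with those in $\Lu_{n-1}$. Equivalently, one may slide the green cherries of $C$ continuously until they are evenly spaced, arriving at a genuine $\Lu_{n-1}$ after renaming $\varepsilon$, while keeping each red cherry on its radius at distance $1 - \varepsilon$, and check that the bound prevents any three cherries from becoming collinear en route.

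The main obstacle is precisely this last verification — that $\varepsilon < 1 - \cos(90^\circ/n)$ is genuinely strong enough to pin down \emph{every} orientation near the enlarged gap so that it matches $\Lu_{n-1}$ (equivalently, that the sliding deformation stays in general position) — because the order type of such a crescent is not determined by its incidence pattern alone: the side of the line through two neighbouring red cherries on which a given green cherry falls can a priori depend on the parameters, and it is the quantitative bound that fixes it.
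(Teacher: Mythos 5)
Your proposal is sound, but note that the paper does not prove this statement at all: it is labelled an Observation and simply read off the construction and Figure~\ref{fig:moon}, so any written argument here goes beyond what the authors supply. Your route — explicit coordinates, identifying the revealed cherry as the unique cherry in the cap cut off by the chord joining the removed green's hull neighbours, then matching $\Lu_n\setminus\{g,r\}$ to $\Lu_{n-1}$ by the angular-order bijection and checking orientations — is the natural way to make the Observation rigorous, and you correctly locate where the hypothesis $\varepsilon<1-\cos(90^\circ/n)$ is actually used. The verifications you defer do go through: writing $\omega=180^\circ/n$ and $\rho=1-\varepsilon>\cos(\omega/2)$, every orientation that is not forced by convex position reduces to a comparison of $\rho\cos\alpha$ with $\cos\beta$ (or with $\rho\cos\beta$), and the needed inequalities all follow from $\cos(\omega/2)\cos A\ge\cos(A+\omega)$ for $0\le A<90^\circ$ (this settles, e.g., your triples $(r_{i-1},r_{i+1},\cdot)$ across the enlarged gap and the ``red beyond every spanning green--green chord'' pattern), together with $(1-\varepsilon)\sin\bigl((k-\tfrac12)\omega\bigr)>\sin(\omega/2)$ for $k\ge 2$, which handles the extreme greens $g_0,g_n$ (only $r_1$, resp.\ $r_{n-1}$, protrudes past the chord $\overline{g_1g_n}$, resp.\ $\overline{g_0g_{n-1}}$) and holds for $n\ge 3$, the only case the induction in Theorem~\ref{moontheorem} needs. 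One point worth making explicit if you complete this: ``order-equivalent to $\Lu_{n-1}$'' tacitly requires that the order type of $\Lu_{n-1}$ not depend on the admissible parameter $\varepsilon'$; your device of reusing the same $\varepsilon$ (legal since $1-\cos(90^\circ/n)<1-\cos(90^\circ/(n-1))$) proves the Observation for that representative, and the same uniform inequalities (equivalently, your sliding deformation staying in general position) give the $\varepsilon'$-independence that the paper also leaves implicit.
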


\begin{definition} \label{simplegreedy}
    We say that a player follows the \textit{Simple greedy tactic} if the player chooses a move according to this rule:
    \medskip
    \texttt{
        ~\\Is there an extremal red cherry? \\
            \indent YES $\longrightarrow$ Take an extremal red cherry. \\
            \indent NO $\longrightarrow$ Take any extremal cherry.
    } 
\end{definition}

\begin{theorem} \label{moontheorem}
    From the moon $\Lu_n$, if Bob follows the Simple greedy tactic, he will obtain all red cherries. This results in $\M(\Lu_n) = \R(\Lu_n) = n-1$.
\end{theorem}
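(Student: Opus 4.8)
The plan is to run an induction on $n$, using Observation~\ref{moon-observations} to peel off one green cherry and one red cherry from $\Lu_n$ and land in a configuration order-equivalent to $\Lu_{n-1}$, for which we will have already established the claim. The induction will show the stronger statement that if Bob follows the Simple greedy tactic on $\Lu_n$ (with Alice moving first), then Bob obtains every red cherry.

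First I would set up the base case. Since $0 < \varepsilon < 1 - \cos(90^\circ/n)$ is required only to make $\Ex(\Lu_n) = \GG(\Lu_n)$ work out, I would check the smallest case $n = 2$, where $\Lu_2$ has $\R(\Lu_2) = 1$ and $\G(\Lu_2) = 3$; here Alice's first move is forced to be green (all extremal cherries are green), this reveals the unique red cherry, and by the Simple greedy tactic Bob grabs it, after which at most two green cherries remain and Bob has all red cherries. (If one prefers, $n=2$ can even be bundled into the general step by letting the base case be "no red cherries remain", which is trivially fine.)

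For the induction step, assume the claim for $\Lu_{n-1}$ and consider a game on $\Lu_n$ with $n \ge 3$. By the construction of the moon, $\Ex(\Lu_n) = \GG(\Lu_n)$, so Alice's first move $a_1$ is necessarily a green cherry. By Observation~\ref{moon-observations}, removing $a_1$ reveals exactly one red cherry, and this is the only extremal red cherry available; hence the Simple greedy tactic instructs Bob to take it as $b_1$. Again by Observation~\ref{moon-observations}, the resulting set $C = \Lu_n \setminus \{a_1, b_1\}$ with $|C| = |\Lu_n| - 2$ is order-equivalent to $\Lu_{n-1}$. Since the outcome of the convex grabbing game depends only on the order type --- and, more to the point, since the Simple greedy tactic is itself defined purely in terms of extremality and weights (which are preserved by order-equivalence, using Lemma~\ref{extremalcommute}) --- Bob continuing with the Simple greedy tactic on $C$ is equivalent to Bob following the Simple greedy tactic on $\Lu_{n-1}$. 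By the induction hypothesis he obtains all $n-2$ red cherries of $C$; together with $b_1$ this is all $n-1$ red cherries of $\Lu_n$. Consequently $\B(\mathbf{q}) = \R(\Lu_n) = n-1$ for the resulting gameplay $\mathbf{q}$, and since $\B(\mathbf{q}) \le \R(\Lu_n)$ always, this is simultaneously the best Bob could hope for and the worst Alice can force, giving $\M(\Lu_n) = \R(\Lu_n) = n-1$.

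The only genuine subtlety --- and the step I would write most carefully --- is the claim that "Bob running the Simple greedy tactic on $C$" coincides with "Bob running the Simple greedy tactic on $\Lu_{n-1}$", for all of Alice's possible continuations, not just for one fixed line of play. This needs the observation that order-equivalence commutes with taking extremal sets at every stage (Lemma~\ref{extremalcommute}) and preserves weights, so the decision tree of the Simple greedy tactic transports along the order-preserving bijection; one should state this as a small lemma ("the Simple greedy tactic is invariant under order-equivalence") or fold it transparently into the induction. Everything else is routine: the forced-first-move structure of $\Lu_n$ and the single-red-reveal property do all the combinatorial work, and no delicate geometry beyond Observation~\ref{moon-observations} is required.
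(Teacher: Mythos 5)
Your proposal is correct and follows essentially the same route as the paper: induction on $n$ with base case $n=2$, using Observation~\ref{moon-observations} to show Alice's forced green move reveals a single red cherry that Bob grabs, leaving a set order-equivalent to $\Lu_{n-1}$. Your explicit remark that the Simple greedy tactic transports along order-equivalence (via Lemma~\ref{extremalcommute}) makes precise a step the paper leaves implicit, but it is the same argument.
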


\begin{proof}
	We proceed by induction on $n$.
	
	In the case with $n=2$, the moon will have four cherries in total; three extremal green cherries and one red cherry lying inside their triangle; therefore, Alice can select any green cherry and Bob will take the only red cherry by the Simple greedy tactic.
	
	Assume that $n > 2$ and the theorem holds for $\Lu_{n-1}$. For Alice's first move $a_1$, there are only green cherries available, and so she will take one of them. By Observation~\ref{moon-observations}, this will reveal a single red cherry, hence Bob, by following the Simple greedy tactic, will always take this red cherry for his first move $b_1$. 
	
	By Observation~\ref{moon-observations}, the remaining set of cherries $\Lu_{n} \setminus \{a_1,b_1\}$ is order-equivalent to $\Lu_{n-1}$.
	Therefore, by the induction hypothesis, Bob will obtain all $n-2$ red cherries from $\Lu_{n} \setminus \{a_1,b_1\}$; and, since he already took a red cherry in his first move, from $\Lu_{n}$ he obtains a total of $n-2+1 = n-1$ red cherries. 
\end{proof}

\section{Miscellaneous}

In order to obtain configurations that are favourable for Alice on even-sized and odd-sized cakes, a single red cherry can be placed outside the convex hull for the sun configuration and moon configuration respectively. Adding the extra red cherry swaps the parity of our constructions. We obtain the following cakes $\C$ and $\D$ that are good for Alice.
    
\begin{observation}
    Theorem~\ref{maintheorem} implies that there exists an even-sized cake $\C$ such that $\M(\C) \le \frac{1}{4}\R(\C) + \frac{1}{4}$. And, in a similar manner, Theorem~\ref{moontheorem} implies that there exists an odd-sized cake $\D$ with any desired $\R(\D) \in \mathbb{N}$ such that $\M(\D) = 0$.
\end{observation}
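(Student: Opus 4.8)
The plan is to follow the recipe sketched just before the statement: turn each of our two Bob-favourable configurations into an Alice-favourable one of opposite parity by gluing on a single extra red cherry $\rho$ far outside the convex hull, and then let Alice open the game by grabbing $\rho$ and afterwards \emph{impersonate} the greedy player in the leftover sub-game. Concretely, for the even case I would fix an odd $k>2$ and set $\C=\Su_k\cup\{\rho\}$, where $\rho$ is a red cherry placed in general position strictly outside $\Conv(\Su_k)$; then $|\C|=4k+2$ is even, $\R(\C)=2k+1$, and $\rho\in\Ex(\C)$ because it is a vertex of $\Conv(\C)$. For the odd case I would take $n=\R(\D)\ge 2$ and set $\D=\Lu_n\cup\{\rho\}$, again with $\rho$ red and generic outside $\Conv(\Lu_n)$; then $|\D|=2n+1$ is odd and $\R(\D)=(n-1)+1=n$. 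Deleting $\rho$ leaves, in each case, a board that is \emph{literally} $\Su_k$ (resp.\ $\Lu_n$), so no order-type bookkeeping beyond checking $\rho\in\Ex(\cdot)$ is needed.

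For $\C$, Alice's strategy is: take $\rho$ on move one, then follow the Careful greedy tactic for the rest of the game. After $\rho$ is removed the board is exactly $\Su_k$ with Bob to move, so from that point on we are playing the convex grabbing game on a fresh sun in which the non-greedy player moves first and the greedy player answers each of his moves --- precisely the situation of Theorem~\ref{maintheorem} with the names ``Alice'' and ``Bob'' interchanged. Invoking that theorem, Alice collects at least $\frac{3k-1}{2}$ of the $2k$ red cherries of $\Su_k$, hence Bob collects at most $2k-\frac{3k-1}{2}=\frac{k+1}{2}$ from $\Su_k$ and none from $\rho$. Since Alice has a strategy capping Bob's total at $\frac{k+1}{2}$, we get $\M(\C)\le\frac{k+1}{2}=\frac14(2k+1)+\frac14=\frac14\R(\C)+\frac14$, and $\R(\C)=2k+1$ can be made arbitrarily large.

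For $\D$, Alice again takes $\rho$ first and then follows the Simple greedy tactic on the leftover board, which is exactly $\Lu_n$ with Bob to move. By the reasoning of Theorem~\ref{moontheorem} --- which shows that the Simple-greedy player, moving second from a fresh moon, sweeps all of its red cherries --- Alice obtains all $n-1$ red cherries of $\Lu_n$ and therefore all $n$ red cherries of $\D$, so $\M(\D)=0$. The two degenerate values of $\R(\D)$ are immediate: for $\R(\D)=1$ a one-cherry (hence odd-sized) cake consisting of a single red cherry has $\M=0$, and for $\R(\D)=0$ any odd-sized all-green cake has $\M=0$.

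The one genuinely delicate point --- and the thing I would make sure to spell out --- is that Theorems~\ref{maintheorem} and~\ref{moontheorem} are stated for Bob, the \emph{second} player, running the greedy tactic, whereas here I need Alice to run it after sacrificing her first move on $\rho$. This re-use is sound only because both proofs are purely structural: they use just the turn order (the tactic-player always moves immediately after the opponent, starting from a fresh sun resp.\ moon) and the $\{0,1\}$ weights, never the identities of the players. Once this is observed, the whole observation follows by quoting the two theorems with the players interchanged, so I do not expect any hard calculation, only the care of stating the role swap explicitly.
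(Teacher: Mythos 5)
Your construction is exactly the one the paper intends: it gives no separate proof of this observation beyond the preceding remark that a single red cherry placed outside the convex hull swaps the parity of the sun and moon constructions, with Alice grabbing it first and then playing the greedy (second-player) role, which is precisely your argument. Your version is correct and even slightly more careful than the paper, since you spell out the player-role swap and handle the degenerate values $\R(\D)\in\{0,1\}$ explicitly.
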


\bigskip

Furthermore, we would like to know what the optimal gameplay looks like in general. We came up with the following conjectures regarding the tactics which each player could employ in order to select their next move. 

\begin{conj} \textit{Greedy-move conjecture}. If $\Ex(C) \cap \RR(C) \neq \emptyset$, there exists a move that takes a red cherry from $\Ex(C) \cap \RR(C)$ such that the move~is~optimal.
\end{conj}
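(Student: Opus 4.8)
The first step is to translate the conjecture into a statement about game values. With a fixed player to move, let $f(C)$ denote the number of red cherries that this player secures from the position $C$ under optimal play by both sides (so $f(C) = \M(C)$ if Bob moves and $f(C) = \R(C) - \M(C)$ if Alice moves). Taking a cherry $c \in \Ex(C)$ gives that player $\W(c)$ now and hands the opponent the position $C \setminus \{c\}$ with the roles swapped, so $f(C) = \max_{c \in \Ex(C)} \bigl( \W(c) + \R(C\setminus\{c\}) - f(C\setminus\{c\}) \bigr) = \R(C) - \min_{c \in \Ex(C)} f(C\setminus\{c\})$, where the last equality uses $\W(c) + \R(C\setminus\{c\}) = \R(C)$. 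Hence the Greedy-move conjecture is equivalent to: \emph{if $\Ex(C) \cap \RR(C) \ne \emptyset$, then $\min_{c \in \Ex(C)} f(C\setminus\{c\})$ is attained at some red cherry}. The plan is to prove this by induction on $|C|$ via an exchange argument: writing $g^\star \in \Ex(C) \cap \GG(C)$ for a best green move, one must produce a red $r \in \Ex(C) \cap \RR(C)$ with $f(C\setminus\{r\}) \le f(C\setminus\{g^\star\})$. As a warm-up the conjecture is already provable for cakes in convex position: there every deletion preserves convex position, so $\Ex(C) = C$ throughout and the game degenerates to both players grabbing red cherries until they run out, giving the closed form $f(C) = \ceil*{\R(C)/2}$; then $f(C\setminus\{r\}) = \ceil*{(\R(C)-1)/2} \le \ceil*{\R(C)/2} = f(C\setminus\{g^\star\})$, as required.

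For the general induction step I would first record the structural facts that make the exchange plausible. Since extremal points stay extremal under deletion of other points, every red cherry of $\Ex(C)$ still lies in $\Ex(C\setminus\{g^\star\})$; in particular $\Ex(C\setminus\{g^\star\}) \cap \RR(C\setminus\{g^\star\}) \ne \emptyset$, so the induction hypothesis applies to the smaller position $C\setminus\{g^\star\}$ and shows the opponent has an optimal reply that takes a red cherry $r_1$, giving $f(C\setminus\{g^\star\}) = \R(C) - f(C\setminus\{g^\star,r_1\})$. Also, deleting the extremal green cherry $g^\star$ only alters the convex hull inside the ``pocket'' cut off by $g^\star$: the cherries that become newly extremal in $C\setminus\{g^\star\}$ form a convex chain joining the two hull-neighbours of $g^\star$. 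The argument then splits on whether $r_1 \in \Ex(C)$ (so that $X$ could have taken $r_1$ himself) or $r_1$ lies strictly inside $\Conv(C)$ and only surfaces in the pocket of $g^\star$. Candidate tools for each case: swapping the order of the moves $g^\star$ and $r_1$ when both orders are legal; peeling the pocket of $g^\star$ one convex-chain vertex at a time in a nested induction; comparing $C\setminus\{r\}$ and $C\setminus\{g^\star\}$ by exhibiting the common ``floor'' $C\setminus\{g^\star,r_1\}$ in a suitable partial order on positions; or a potential function on a position measuring the excess of exposable red cherries over exposable green cherries along the hull.

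\textbf{Main obstacle.} The difficulty --- and, I suspect, the reason the statement is still only a conjecture --- is that there is no order morphism relating the game trees of $C\setminus\{r\}$ and $C\setminus\{g^\star\}$: deleting a red versus a green extremal cherry changes which cherries become extremal in incomparable ways, and $f$ is not monotone under inclusion of point sets or under any obvious refinement. Concretely, even in the seemingly ``nice'' case $r_1 \in \Ex(C)$ the exchange does not close: if $X$ plays $r_1$ the opponent is free to answer with something other than $g^\star$, and the recursion only yields $f(C\setminus\{r_1\}) \ge f(C\setminus\{g^\star\}) - 1$, which is the wrong inequality. Making the induction go through seems to require a genuinely new invariant that survives alternating optimal play --- something finer than the quantities $\R$, $\Sle$, $\Tot$ that sufficed for the sun (cf.\ the proof of Theorem~\ref{maintheorem}). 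A reasonable order of attack is therefore: first the convex-position case above, then cakes with a single interior cherry, then cakes whose interior cherries are all green (so Bob's greedy deletions never reshape the hull), before attempting the full inductive scheme; and it is worth running a computer search over all order types of small cakes both to build confidence and to guard against a subtle counterexample.
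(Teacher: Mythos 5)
You have not proved the statement, and you say so yourself --- but in this case that is exactly the right outcome, because the paper does not prove it either. The Greedy-move conjecture is stated as a conjecture, and the conclusion explicitly lists ``Does the Greedy-move conjecture hold?'' as an open question; the only results the paper establishes about it are Proposition~\ref{greedystronggreedy} (the Greedy-move conjecture implies the Strong greedy-move conjecture) and the counterexample to the related No-reveal-move conjecture. So there is no hidden argument of the authors that you failed to reconstruct, and your diagnosis of the situation is accurate.

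Your partial content is sound. The reformulation via the value function $f$ with $f(C) = \R(C) - \min_{c \in \Ex(C)} f(C\setminus\{c\})$ is the correct minimax recursion (it is the same recursion the paper uses implicitly in the proof that $\M$ depends only on the order type), and it correctly reduces the conjecture to showing that the minimum is attained at a red extremal cherry. The convex-position warm-up with $f(C) = \ceil*{\R(C)/2}$ is right, the monotonicity of extremality under deletion is right, and your computation that the naive exchange only yields $f(C\setminus\{r_1\}) \ge f(C\setminus\{g^\star\}) - 1$ (a lower bound where an upper bound is needed) is a genuine and correctly identified obstruction --- essentially the reason the problem is open. Two remarks: your exchange-of-moves idea is close in spirit to the argument the paper does carry out for Proposition~\ref{greedystronggreedy}, where the exchange works because \emph{both} cherries being compared are red, so the two gameplays lead to the same residual position $C\setminus\{c_i,c_j\}$ with equal immediate gains; your red-versus-green comparison lacks precisely that symmetry. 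And if your program succeeded, Proposition~\ref{greedystronggreedy} would immediately upgrade it to the Strong greedy-move conjecture, so the small-order-type computer search you propose is indeed the sensible next step --- it would either supply confidence or a counterexample, and the paper offers no evidence in either direction beyond the sun and moon constructions, on which greedy-style tactics happen to be optimal.
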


\noindent Note that the Careful greedy tactic (see Definition~\ref{carefulgreedy}) and the Simple greedy tactic (see Definition~\ref{simplegreedy}) are refinements of what the Greedy-move conjecture says.

\begin{conj} \textit{Strong greedy-move conjecture}. If $\Ex(C) \cap \RR(C) \neq \emptyset$, then every move that takes a red cherry is optimal.
\end{conj}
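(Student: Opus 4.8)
The plan is to discard the nested min--max expression and work instead with the equivalent recursive value function. For a set $S$ of remaining cherries let $f(S)$ be the number of red cherries secured by the player to move under optimal play (both players, having symmetric objectives, simply maximise their own red count, so this is well defined regardless of whose turn it is). Writing out ``take $c$, then be the non-mover on $S\setminus\{c\}$'' and using $\R(S\setminus\{c\})=\R(S)-\W(c)$, one obtains the clean recursion
\[ f(S) \;=\; \R(S) \;-\; \min_{c \in \Ex(S)} f(S \setminus \{c\}), \qquad f(\emptyset) = 0, \]
together with the exact optimality criterion: \emph{taking $c \in \Ex(S)$ is an optimal move if and only if $f(S \setminus \{c\}) = \min_{c' \in \Ex(S)} f(S \setminus \{c'\})$.} Hence the Strong greedy-move conjecture is equivalent to: whenever $r \in \Ex(S) \cap \RR(S)$, we have $f(S \setminus \{r\}) \le f(S \setminus \{c\})$ for every $c \in \Ex(S)$.

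I would prove this by strong induction on $|S|$, leaning repeatedly on one elementary fact: deleting points from a planar point set can only enlarge the set of extremal points, so if $c_1,c_2 \in \Ex(S)$ then $c_2 \in \Ex(S\setminus\{c_1\})$, $c_1 \in \Ex(S\setminus\{c_2\})$, and both deletion orders are legal and reach the same set $S\setminus\{c_1,c_2\}$. With this, most cases of the induction step are pure bookkeeping. To compare $f(S\setminus\{r\})$ with $f(S\setminus\{r'\})$ for a second extremal red $r'$, expand $f(S\setminus\{r\}) = (\R(S)-1) - \min_d f(S\setminus\{r,d\})$, bound the minimum above by its value at $d=r'$, and apply the induction hypothesis to $S\setminus\{r\}$ (which contains the extremal red $r'$) to identify that value with the minimum; the $\R$-arithmetic then closes the inequality, and symmetry gives equality of all extremal-red values. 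The comparison of $f(S\setminus\{r\})$ with $f(S\setminus\{g\})$ for an extremal \emph{green} $g$ works the same way \emph{provided} $S\setminus\{r\}$ still has some extremal red $\rho$: one routes everything through the position $S\setminus\{g,r,\rho\}$, a common child of $S\setminus\{r,\rho\}$ and $S\setminus\{g,r\}$ (both of which still contain the extremal red $\rho$), and the induction hypothesis applied there turns the relevant minima into specific values, after which the inequality reduces to ``$1 \le 1$''.

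The single case this local trick does not reach --- and which I expect to be the genuine obstacle --- is when $r$ is the \emph{only} extremal red of $S$ and $S\setminus\{r\}$ has \emph{no} extremal red whatsoever (every hull vertex of $T := S\setminus\{r\}$ is green). Using the induction hypothesis on $S\setminus\{g\}$ (which does contain the extremal red $r$) to write $f(S\setminus\{g\}) = \R(S) - f(S\setminus\{g,r\})$, the required inequality $f(S\setminus\{r\}) \le f(S\setminus\{g\})$ collapses to the internal claim
\[ f(T \setminus \{g\}) \;\le\; 1 + \min_{d \in \Ex(T)} f(T \setminus \{d\}) \qquad \text{for every } g \in \Ex(T), \]
i.e.\ to: in a position whose convex hull is entirely green, the values $f(T\setminus\{d\})$ obtained by deleting the various hull vertices all lie within $1$ of one another. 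This is precisely the content of the weaker greedy-move conjecture for $S$, and I do not see how to get it from the deletion-order bookkeeping above, because deleting different hull greens can expose genuinely different interior structure (one choice may uncover a red cherry, another may not), so there is no common hub position to route through. To push it through I would try to carry an auxiliary monotonicity invariant through the same induction --- for instance a uniform bound $\lvert f(T\setminus\{c_1\}) - f(T\setminus\{c_2\})\rvert \le 1$ for equal-weight extremal $c_1,c_2$, proved simultaneously --- or else a geometric pairing argument matching the two players' remaining moves across the two deletions. This last step is where I would concentrate the effort, and its apparent difficulty is presumably why the statement is recorded here only as a conjecture.
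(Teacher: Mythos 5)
The statement you set out to prove is not a theorem of the paper: it is recorded there as the \emph{Strong greedy-move conjecture} and is left open, the paper's only result about it being Proposition~\ref{greedystronggreedy}, the conditional implication that the Greedy-move conjecture implies it. Your proposal does not close this gap either, as you yourself acknowledge. Concretely, the unproved step is exactly the final case you isolate: $r$ is the unique element of $\Ex(S)\cap\RR(S)$ and every hull vertex of $T=S\setminus\{r\}$ is green. There the required inequality $f(S\setminus\{r\})\le f(S\setminus\{g\})$ reduces to showing that the values $f(T\setminus\{d\})$ over green hull vertices $d$ of $T$ differ by at most one, and your common-child exchange has nothing to route through, because deleting different green hull vertices can expose genuinely different interior cherries. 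This residual claim is not a technicality that more bookkeeping will absorb: it is in substance the Greedy-move conjecture at the position $T$, i.e.\ precisely the open question the paper poses in its conclusion, so your induction cannot be completed without resolving that open problem.

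The portion you do complete is sound and is worth comparing with the paper. Your recursion $f(S)=\R(S)-\min_{c\in\Ex(S)}f(S\setminus\{c\})$ is a correct reformulation of the minimax value, and your treatment of two extremal reds $r,r'$ --- using that extremal points stay extremal under deletion and routing both lines of play through the common child $S\setminus\{r,r'\}$ to get $f(S\setminus\{r\})=f(S\setminus\{r'\})$ --- is essentially the same exchange argument as in the paper's proof of Proposition~\ref{greedystronggreedy}; your red-versus-green case with a surviving extremal red $\rho$ is a clean unconditional extension of it. In effect, your proposal amounts to another proof that the Greedy-move conjecture implies the Strong greedy-move conjecture (indeed it localizes exactly where the weak conjecture is needed), but it is not, and given the current state of knowledge cannot be expected to be, a proof of the conjecture itself.
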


\noindent We will soon show that, even though we don't know whether the Greedy-move conjecture and the Strong greedy-move conjecture hold, we can easily prove that the former implies the latter (while the other implication holds trivially).

\begin{conj}
    \textit{No-reveal-move conjecture}. If $\Ex(C) \cap \RR(C) = \emptyset$ and we have a set of non-revealing moves $N = \{c \in \Ex(C) ~|~ \Ex(C \!\setminus\! \{c\}) \cap \RR(C) = \emptyset \}$ that is not empty, then there exists $c \in N$ such that selecting $c$ is optimal.
\end{conj}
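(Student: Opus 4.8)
\smallskip
\noindent\emph{A possible line of attack.} The plan is to prove the equivalent statement by strong induction on $|C|$: whenever it is some player's turn in a position $C$ with $\Ex(C)\cap\RR(C)=\emptyset$ and $N\neq\emptyset$, that player has an optimal move in $N$. The base cases $|C|\le 3$ are immediate, since then $\Ex(C)=C$, so the hypothesis forces all cherries to be green, $N=\Ex(C)$, and every move is (trivially) optimal. For the inductive step write $\mathrm M$ for the player on turn and $\mathrm P$ for the opponent, and argue by contradiction: suppose every optimal move of $\mathrm M$ lies in $\Ex(C)\setminus N$. Fix one such optimal move $a^\star$ and any $c_0\in N$. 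All extremal cherries are green, so $a^\star$ is green, and removing it exposes a nonempty set of cherries that includes at least one red cherry $r$; note that every extremal red cherry of $C\setminus\{a^\star\}$ was exposed by this move.

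The first reduction uses greedy behaviour: invoking the \emph{Greedy-move conjecture} stated above (or, towards an unconditional argument, an auxiliary lemma asserting that grabbing an available red cherry is never disadvantageous), one may assume that $\mathrm P$'s optimal reply to $a^\star$ is to take such a red cherry $r$. Then the two moves $a^\star$ and $r$ together carry $C$ to $C\setminus\{a^\star,r\}$ with $\mathrm M$ again on turn, $\mathrm M$ having gained $0$ and $\mathrm P$ having gained $1$. Since $a^\star$ was chosen optimal for $\mathrm M$, playing $c_0$ instead is already known to be no better for $\mathrm M$ than playing $a^\star$; hence it suffices to prove the reverse comparison, namely that playing $c_0$ from $C$ leaves $\mathrm M$ at least as well off as playing $a^\star$. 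The two comparisons together would make $c_0$ optimal, contradicting the assumption that all optimal moves are revealing.

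To obtain that comparison I would follow the ``stalling line'' of play out of $C\setminus\{c_0\}$: this position is red-free, so by the induction hypothesis the player on turn has an optimal move in $N$ as long as $N$ is nonempty there, and iterating keeps the game red-free while safe moves last. I would then relate this line to the line out of $C\setminus\{a^\star\}$ in which $\mathrm P$ has just taken $r$. The structural facts available for this are: removing a cherry never destroys the extremality of another, so the surviving cherries of $\Ex(C)$ stay extremal along both lines and in particular $a^\star\in\Ex(C\setminus\{c_0\})$; the red cherries hidden behind $c_0$ remain hidden, since $c_0\in N$; and Lemma~\ref{extremalcommute} detects when the two lines reach order-equivalent remainders, at which point one invokes the induction hypothesis together with the greedy lemma once more.

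The main obstacle is precisely this relating step. The convex hulls of $C\setminus\{c_0\}$ and $C\setminus\{a^\star\}$ genuinely differ --- in particular their sets of newly exposed cherries differ --- so a strategy in one position cannot be transplanted verbatim into the other, and one must control simultaneously which red cherries become extremal, at which ply, and whose turn it is then. I expect the crux to be the construction of a single monovariant along the substituted play --- morally something like ``(the number of still-hidden red cherries) plus (the indicator that it is $\mathrm P$'s turn)'' being non-increasing --- that certifies $\mathrm M$ is never worse off for having stalled. This is, I suspect, exactly the difficulty that keeps the statement a conjecture rather than a theorem; a fully unconditional proof would in addition need the ``grabbing red is never disadvantageous'' lemma, whose proof seems to be of comparable difficulty to the Greedy-move conjecture itself.
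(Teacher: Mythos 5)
Your proposal cannot be completed, and not merely because of the step you flag as the "main obstacle": the statement itself is false, and the paper does not prove it --- it disproves it. Proposition~\ref{norevealfalse} exhibits a cake (Figure~\ref{fig:cex}) in which the unique non-revealing first move $c_2$ yields Alice only $A(\mathbf{q})=1$, whereas the revealing move $c_1$ exposes \emph{two} red cherries simultaneously; Bob can neutralize at most one of them, so Alice collects a red cherry on her next turn, and by repeating the same double-exposure trick with $c_3$ she secures $A(\mathbf{q})=2$. So in that position every optimal move is revealing, and no argument of the kind you sketch can go through.

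The counterexample pinpoints exactly where your plan breaks. Your hoped-for monovariant --- roughly ``(number of still-hidden red cherries) plus (indicator that it is the opponent's turn) is non-increasing along the stalled line'' --- presupposes that red cherries become extremal at most one per move, so that the opponent's greedy replies can absorb each exposure. But a single move can expose two red cherries at once (this is your own case ``removing $a^\star$ exposes a nonempty set \ldots that includes at least one red cherry,'' where the set may contain two), and deliberately forcing such a double exposure can be strictly better for the mover than stalling, because the opponent's single reply cannot cover both threats. This is precisely the mechanism of Figure~\ref{fig:cex}, and it also explains why your comparison ``playing $c_0$ leaves the mover at least as well off as playing $a^\star$'' is unprovable: it is simply not true in general. (Your additional reliance on the unproven Greedy-move conjecture would have made the argument conditional in any case, but that is secondary; the conclusion you are aiming at is refuted outright.)
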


\noindent We later found a counterexample that disproved the No-reveal-move conjecture, which we will show soon.

\begin{prop} \label{greedystronggreedy}
	The Greedy-move conjecture implies the Strong greedy-move conjecture.
\end{prop}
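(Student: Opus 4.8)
The plan is to prove the implication by a single strong induction on $|C|$, using only one elementary geometric fact together with an exchange (strategy-stealing) argument. It is convenient to record whose turn it is: write a \emph{position} as a pair $(C,P)$ with $C \subseteq \C$ the set of remaining cherries and $P \in \{\mathrm{Alice},\mathrm{Bob}\}$ the player to move, and let $\mu(C,P)$ be the minimax value of Bob's total gain over the rest of the game, so that $\mu(C,\mathrm{Alice}) = \min_{a \in \Ex(C)} \mu(C\setminus\{a\},\mathrm{Bob})$ and $\mu(C,\mathrm{Bob}) = \max_{b \in \Ex(C)} \bigl(\W(b) + \mu(C\setminus\{b\},\mathrm{Alice})\bigr)$, with $\mu(\emptyset,\cdot)=0$; in particular $\mu(C,\mathrm{Alice})$ is the quantity $\M(C)$ from the definition. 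A move $m \in \Ex(C)$ is \emph{optimal} at $(C,P)$ precisely when it attains the minimum (if $P=\mathrm{Alice}$) or the maximum (if $P=\mathrm{Bob}$) in the corresponding recursion. The only geometric input is monotonicity of the extreme-point operator: if $C' \subseteq C$ and $c \in C' \cap \Ex(C)$ then $c \in \Ex(C')$, which is immediate from $\Conv(C'\setminus\{c\}) \subseteq \Conv(C\setminus\{c\})$.

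Assuming the Greedy-move conjecture at every position, I will show by induction on $N=|C|$ that the Strong greedy-move conjecture holds at every position with $|C|=N$. The base case $N \le 1$ is vacuous (no red extreme cherry) or trivial (the unique remaining cherry is the only move). For the step, let $(C,P)$ be a position with $\Ex(C)\cap\RR(C) \neq \emptyset$ and $|C|=N \ge 2$, and let $r \in \Ex(C)\cap\RR(C)$ be arbitrary; I must show that taking $r$ is optimal at $(C,P)$. By the Greedy-move conjecture there is some $r_0 \in \Ex(C)\cap\RR(C)$ that is optimal at $(C,P)$; if $r=r_0$ we are done, so assume $r \neq r_0$.

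Now the exchange step. Since $r_0 \in \Ex(C)$ and $r_0 \neq r$, monotonicity gives $r_0 \in \Ex(C\setminus\{r\})$, and $r_0$ is red, so the position $(C\setminus\{r\},\bar P)$ (with $\bar P$ the opponent of $P$) again has a red extreme cherry; as $|C\setminus\{r\}| = N-1$, the induction hypothesis says that taking $r_0$ is optimal there. Symmetrically, $r \in \Ex(C\setminus\{r_0\})$ is red and taking $r$ is optimal at $(C\setminus\{r_0\},\bar P)$. Since $C\setminus\{r,r_0\} = C\setminus\{r_0,r\}$ and in both lines the cherry removed has weight $1$, the two optimal replies lead to the same value:
\[ \mu(C\setminus\{r\},\bar P) \;=\; \mathbf{1}[\bar P = \mathrm{Bob}] + \mu(C\setminus\{r,r_0\},P) \;=\; \mu(C\setminus\{r_0\},\bar P). \]
If $P=\mathrm{Alice}$ then $\bar P=\mathrm{Bob}$; since $r_0$ is optimal for Alice at $(C,P)$, the common value equals $\mu(C,\mathrm{Alice})$, so taking $r$ also attains the minimum and is optimal. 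If $P=\mathrm{Bob}$ then $\bar P=\mathrm{Alice}$ and $\mu(C\setminus\{r\},\mathrm{Alice}) = \mu(C\setminus\{r_0\},\mathrm{Alice})$, hence $\W(r)+\mu(C\setminus\{r\},\mathrm{Alice}) = \W(r_0)+\mu(C\setminus\{r_0\},\mathrm{Alice}) = \mu(C,\mathrm{Bob})$, so taking $r$ is optimal. This closes the induction.

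The point of the argument is that although deleting $r$ may reveal brand-new extreme points and thereby change Bob's optimal continuation, we never need to analyse those continuations: the single cherry $r_0$ survives every such deletion by monotonicity, and the induction hypothesis pins down its value both after $r$ and after $r_0$; the unproved Greedy-move conjecture is invoked only to produce the witness $r_0$ at the position under consideration, everything else being the induction hypothesis. The main thing to get right is the uniform bookkeeping over whose turn it is — in particular ensuring the $\W(\cdot)$ terms on the two sides of the displayed identity cancel because both $r$ and $r_0$ are red — and being explicit that a position carries its to-move player, so that "$\M$", "optimal" in the two conjectures are read at the correct subposition.
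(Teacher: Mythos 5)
Your proof is correct and rests on essentially the same exchange argument as the paper's: the Greedy-move conjecture supplies an optimal red move $r_0$, the induction hypothesis shows the other red cherry is an optimal reply after either deletion, and since both orders leave the same set $C\setminus\{r,r_0\}$ with the same weight removed, the two values coincide. The differences are only cosmetic --- you induct on $|C|$ instead of the paper's $|\rex(C)|$, argue directly rather than by contradiction, and track the to-move player explicitly where the paper phrases the argument only from Alice's side.
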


\begin{proof}
	Consider the following set of red cherries $\rex(C) = \{c \in \RR(C) ~|~ c \notin \Conv(\GG(C)) \} $. 
	We prove that ``the Greedy-move conjecture implies the Strong greedy-move conjecture'' by induction on $|\rex(C)|$. If $|\rex(C)|=1$, both conjectures are trivially equivalent.
	
	Assume that the Greedy-move conjecture holds in general and that the Strong \hbox{greedy-move} conjecture holds for up to $|\rex(C)| = n-1$ red cherries. We want to prove that the Strong \hbox{greedy-move} conjecture holds for up to $|\rex(C)| = n$ red cherries. Seeking contradiction, assume that Alice has two possible moves taking a red cherry $c_i, c_j \in \rex(C) \cap \Ex(C)$ that lead to different outcomes $\B(\mathbf{q})$.
	
	If Alice starts by taking $c_i$, then by the induction hypothesis, $c_j$ is among Bob's optimal moves. If Alice starts by taking $c_j$, then by the induction hypothesis, $c_i$ is among Bob's optimal moves. Either way, this leaves $C' = C\setminus\{c_i, c_j\}$. In the first case, Alice ends up with $\W(c_i) + \R(C') - \M(C')$ red cherries. In the second case, Alice ends up with $\W(c_j) + \R(C') - \M(C')$ red cherries. Since they are both equal to $1 + \R(C') - \M(C')$, we obtain a contradiction.
\end{proof}

\pagebreak
\begin{prop} \label{norevealfalse}
	The No-reveal-move conjecture is false.
\end{prop}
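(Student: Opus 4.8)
The plan is to refute the conjecture by exhibiting an explicit counterexample: a cake $\C$ together with a subset $C \subseteq \C$ describing a reachable position for which $\Ex(C) \cap \RR(C) = \emptyset$, the non-revealing set $N = \{c \in \Ex(C) \mid \Ex(C \setminus \{c\}) \cap \RR(C) = \emptyset\}$ is non-empty, and yet no $c \in N$ is optimal for the player to move. It is convenient to arrange that $C = \C$, so that the position is the opening and ``optimal'' refers to Alice's objective of minimizing $\B(\mathbf{q})$; since then every extremal cherry is green, a move $c$ is optimal precisely when $\max_{b \in \Ex(\C \setminus c)}\bigl(\W(b) + \M(\C \setminus \{c,b\})\bigr)$ attains the minimum $\M(\C)$. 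The observation that keeps the work finite is that we need not identify an optimal move at all: it suffices to pick one revealing move $c^\ast \in \Ex(\C)\setminus N$, prove an upper bound $\max_{b}\bigl(\W(b) + \M(\C \setminus \{c^\ast,b\})\bigr) \le v$ for an explicit $v$, and then prove, for every $c \in N$, the matching lower bound $\max_{b}\bigl(\W(b) + \M(\C \setminus \{c,b\})\bigr) > v$.

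Concretely, I would take a small convex polygon of green cherries as the hull with a few red cherries placed strictly inside, arranged so that some hull vertices ``shield'' a red cherry --- removing such a vertex puts that red on the new hull --- while at least one hull vertex shields none. Verifying the hypotheses ($\C$ in general position, $\Ex(\C) = \GG(\C)$, $N \neq \emptyset$ and $N \subsetneq \Ex(\C)$) is then routine from the definitions, deciding extremality via Lemma~\ref{insidetriangle} together with Carathéodory; I would build in a mirror or rotational symmetry so that the opening moves, and hence the subgames to analyse, fall into only a couple of genuinely distinct cases. For the upper bound after $c^\ast$ I must control $\M(\C \setminus \{c^\ast,b\})$ for each of Bob's finitely many replies $b$, which I would do by writing down, in each case, an explicit strategy for Alice; for the lower bound after a non-revealing $c$ I only need a single good reply $b_c$ for Bob together with an explicit Bob strategy from $\C \setminus \{c,b_c\}$, a genuine asymmetry in our favour. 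Order-type invariance (the Proposition following Lemma~\ref{extremalcommute}) then lets me recognise many of the positions two or three moves deep as order-equivalent to smaller, already-resolved ones, so the part of the game tree that actually has to be expanded by hand stays small.

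The main obstacle is this game-tree bookkeeping, and in particular the need to re-verify at every intermediate node which cherries are extremal --- a single removal can expose a red cherry or hide one, so the sets $N$ and $\Ex(C) \cap \RR(C)$ change non-monotonically as the game progresses. The design tension is to keep the point set small enough that the analysis is by hand yet large enough to force a genuine zugzwang: a position in which retaining one's green shielding vertices is actively bad because it merely postpones, and worsens, the inevitable concession of a red cherry. I expect the crux to be a short case split showing that after Alice plays in $N$, Bob has a reply that fixes the pairing of the remaining red cherries to his advantage (trading one red now for favourable red-parity later), whereas a revealing move by Alice lets her fix that pairing in hers; the one unavoidable explicit computation is the verification that these two outcomes differ by at least the claimed margin, i.e.\ that the lower bound at each $c \in N$ strictly exceeds $v$.
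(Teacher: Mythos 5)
There is a genuine gap: the statement to be proved is the existence of a counterexample, so the entire mathematical content of any proof is an explicit configuration together with the verification that no non-revealing move is optimal in it. Your proposal correctly sets up the logical frame (fix the position to be the opening, bound the value after one chosen revealing move $c^\ast$ from above, and bound the value after every $c \in N$ from below), but it never exhibits a point set, weights, or the actual game-tree analysis; ``I would take a small convex polygon of green cherries with a few red cherries inside, arranged so that\dots'' is a search strategy, not a proof. As it stands nothing has been verified, and the hard part --- designing a configuration where avoiding all reveals is strictly bad --- is exactly what is deferred.

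Moreover, the specific geometric mechanism you sketch points in a direction that does not obviously work. You describe hull vertices that each ``shield'' a red cherry, so that removing such a vertex puts \emph{that} (single) red on the new hull; but a move revealing exactly one red cherry typically just donates it to Bob, who grabs it immediately, and it is then unclear why this should ever beat a non-revealing move. The counterexample in the paper (Figure~\ref{fig:cex}) works through a different and essential idea: Alice's revealing move exposes \emph{two} red cherries simultaneously, so Bob can capture only one of them and Alice takes the other on her next turn; a second such double-reveal later in the game ($a_3 = c_3$) secures her a second red, for $\A(\mathbf{q}) = 2$, whereas the unique non-revealing opening $c_2$ leaves her with $\A(\mathbf{q}) = 1$ (since $\M(\C \setminus \{c_2, c_4\}) = 3$). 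Your closing remarks about ``trading one red now for favourable red-parity later'' gesture toward a zugzwang of this kind, but without the double-reveal device (or some concrete substitute) and an explicit instance checked move by move, the refutation has not actually been carried out.
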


\begin{proof}
	We show a sketch of the proof through the construction in Figure~\ref{fig:cex}. 
	
	In this construction, the only non-revealing first move is to select $a_1 = c_2$.
	If, in the gameplay $\mathbf{q}$, Alice starts by taking this green cherry $c_2$, giving $\mathbf{q} = (c_2, b_1, \dots, b_5)$, then Bob can select $c_4$, giving $\mathbf{q} = (c_2, c_4, a_2,\dots,b_5)$, and they end up with $A(\mathbf{q})=1$ because $\M(C \setminus \{c_2, c_4\}) = 3$.
	
	However, if Alice selects $a_1 = c_1$ for her first move, she reveals two red cherries at the same time. Alice is therefore able to take a red cherry in her second move, after Bob moves. For Bob's first two moves, in order for Alice to not obtain a second red cherry on her third move, he has to take one of the two red cherries which Alice revealed in her first move, and $c_2$; however, the order of Bob selecting these does not matter. If Alice selects $a_3 = c_3$, she once again reveals two red cherries, and she is then guaranteed to be able to select a second red cherry in her fourth move. Therefore, by not selecting the non-revealing cherry in her first move, Alice is able to get a result of $A(\mathbf{q}) = 2$.
\end{proof}

\begin{figure}[h]
	\centering
	\includegraphics[width=13cm]{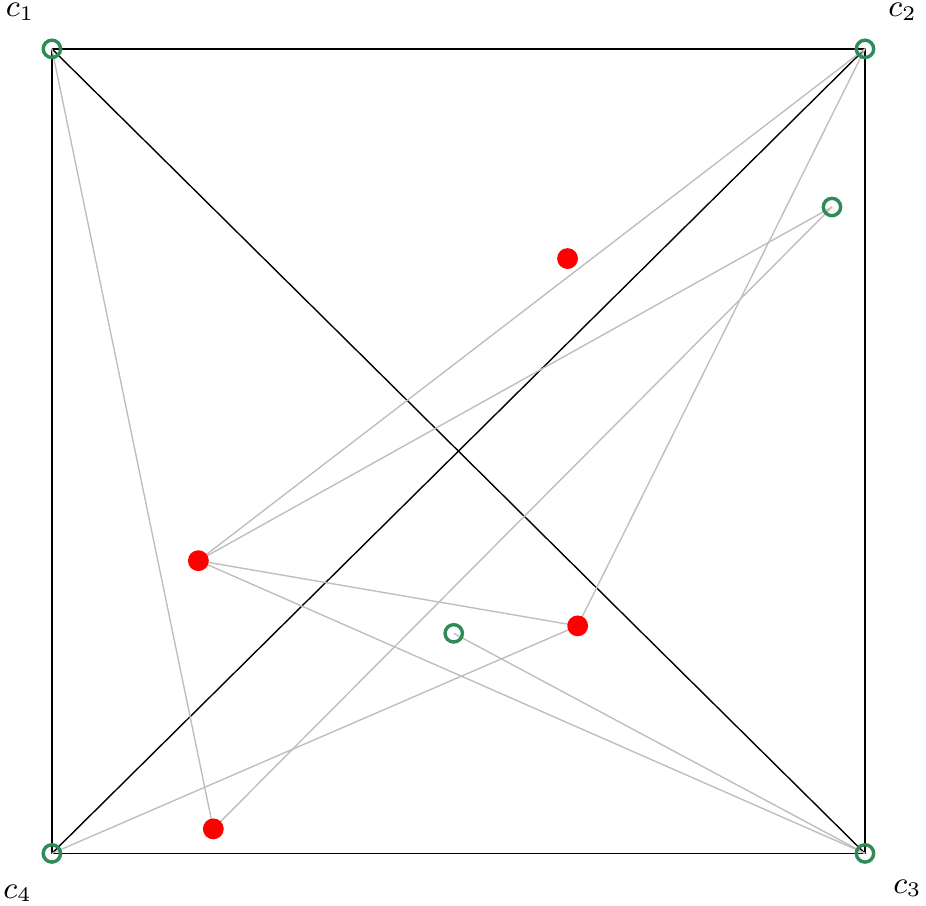}
	\caption{Counterexample to the No-reveal-move conjecture, with lines added for visual aid.}
	\label{fig:cex}
\end{figure}

\section{Conclusion}

We solved the open problem from \cite{matsumoto_convex_2020} by providing a construction that builds an \hbox{odd-sized} cake $\Su_k$ with $\M(\Su_k) - (\R(\Su_k) - \M(\Su_k)) \ge x$ for any $x \in \mathbb{N}$.
Now consider the value:
\[ \gamma = \limsup_{p \,\rightarrow\, \infty} \left( \max_{\substack{\text{odd-sized} \\ \text{cake } \C}} \left\{ \frac{\M(\C)}{\R(\C)} ~ \middle| ~ \R(\C) = p \right\} \right) \]
Our construction provides a lower bound $\gamma \ge \frac{3}{4}$. On the other hand, \cite{matsumoto_convex_2020} shows that Alice can always obtain at least one red cherry on any odd-sized cake. However, this only gives the trivial upper bound $\gamma \le 1$. We pose a new open question of determining the value~$\gamma$.
\bigskip

Analysis of gameplays would be easier if the state space of need-to-be-considered gameplays were limited by knowing which moves are optimal (or which moves cannot be optimal) in certain situations. We therefore leave the reader with another open question: Does the Greedy-move conjecture hold?
\bigskip

\subsection*{Acknowledgments}
The authors would like to express gratitude to Pavel Valtr, Jan Kynčl, and Jan Kratochvíl for helpful discussions. A special thanks goes to Jan Kynčl for numerous comments on this manuscript! Our last but not least thanks goes to Pavel Valtr for the chocolate bet.

Sara Nicholson's work was supported by grant no.~21-32817S of the Czech Science Foundation (GA\v{C}R).
\bigskip


\end{document}